\documentclass[a4paper, 12pt]{article}
\usepackage[utf8]{inputenc}
\usepackage[T1]{fontenc}
\usepackage{amsthm,amssymb,amsmath, amsfonts}
\usepackage{graphicx,algorithmic,algorithm}
\usepackage[]{datetime}
\usepackage{aliascnt}
\usepackage{lineno}

\usepackage{hyperref}

\usepackage{enumerate}

\usepackage{xcolor}
\definecolor{colorD}{RGB}{168,220,82}
\definecolor{colorC}{RGB}{58,221,255}
\definecolor{colorM}{HTML}{F0A202}
\definecolor{colorJ}{RGB}{255,244,58}

\usepackage{tikz}
\usetikzlibrary{calc}
\usetikzlibrary{backgrounds}
\usetikzlibrary{decorations.pathreplacing}
\tikzset{black node/.style={draw, circle, fill = black, minimum size = 5pt, inner sep = 0pt}}
\tikzset{white node/.style={draw, circle, fill = white, minimum size =
    5pt, inner sep = 0pt}}
\tikzset{bag/.style={draw, circle, fill = white, minimum size = 0.75cm, inner sep = 0pt}}
\tikzset{smallbag/.style={draw, circle, fill = white, minimum size =
    0.33cm, inner sep = 0pt, thin}}
\tikzset{medbag/.style={draw, circle, fill = white, minimum size = 0.5cm, inner sep = 0pt, thin}}
\tikzset{normal/.style = {draw=none, fill = none, rectangle}}

\hypersetup{
  pdftitle = {A Menger-like property of tree-cut width},
  pdfauthor=  {Archontia C. Giannopoulou, O-joung Kwon, Jean-Florent Raymond, and Dimitrios Thilikos},
  colorlinks = true,
  urlcolor = black!30!pink,
  linkcolor = black!30!red,
  citecolor = black!30!green,
  menucolor = {red}
  filecolor = {cyan},
  anchorcolor = {black}
  urlcolor = {magenta},
  runcolor = {cyan},
 linkbordercolor = {white}
}


\newcommand{\mynewtheorem}[2]{
  \newaliascnt{#1}{dummy}
  \newtheorem{#1}[#1]{#2}
  \aliascntresetthe{#1}
  \expandafter\def\csname #1autorefname\endcsname{#2}
}


\theoremstyle{plain}
  \mynewtheorem{theorem}{Theorem}
  \mynewtheorem{proposition}{Proposition}
  \mynewtheorem{corollary}{Corollary}
  \mynewtheorem{lemma}{Lemma}
\mynewtheorem{sublemma}{Sublemma}
\theoremstyle{definition}
  \mynewtheorem{definition}{Definition}
\theoremstyle{remark}
\mynewtheorem{remark}{Remark}
\mynewtheorem{conjecture}{Conjecture}
\mynewtheorem{question}{Question}       
\mynewtheorem{observation}{Observation}
\mynewtheorem{problem}{Problem}

\newtheoremstyle{claimstyle}
  {\topsep}   
  {\topsep}   
  {\itshape}  
  {}       
  {} 
  {\underline{.}}         
  {5pt plus 1pt minus 1pt} 
  {{\underline{\thmname{#1}\thmnumber{ #2}}}}          
  \theoremstyle{claimstyle}
  \mynewtheorem{claim}{Claim}

  \DeclareMathOperator{\dist}{{\sf dist}}
\DeclareMathOperator{\adh}{{\sf adh}}
\DeclareMathOperator{\tcw}{{\bf tcw}}
\DeclareMathOperator{\tw}{{\bf tw}}
\DeclareMathOperator{\width}{{\sf width}}

\newcommand{\X}{\mathcal{X}}

\newcommand{\N}{\mathbb{N}}

\newcommand{\itemref}[1]{\hyperref[#1]{(\ref*{#1})}}

\begin{document}

\title{A Menger-like property of tree-cut width\footnote{A preliminary
  version of this paper appeared in the proceedings of the
  36\textsuperscript{th} Symposium on Theoretical Aspects of
  Computer Science (STACS '19)~\cite{Giannopoulou2018lean}.\newline
  This work has been supported by the ERC consolidator grant DISTRUCT-648527 (A.~C.~Giannopoulou and J.-F.\ Raymond), the Institute for Basic Science (IBS-R029-C1) (O.\ Kwon), the National Research Foundation of Korea (NRF) grant funded by the Ministry of Education No.\ NRF-2018R1D1A1B07050294 (O.\ Kwon), the projects ``ESIGMA" (ANR-17-CE23-0010) and ``DEMOGRAPH'' (ANR-16-CE40-0028) (D.\ M.\ Thilikos), and by the Research Council of Norway and the French Ministry of Europe and Foreign Affairs, via the Franco-Norwegian project PHC AURORA 2019 (D.~M.~Thilikos). At the time this research was carried out, J.-F. Raymond was affiliated to the Logic and Semantics research group at Technische Universit\"at Berlin, Germany.}}

\author{
  Archontia C. Giannopoulou\footnote{Department of Informatics and Telecommunications, National and Kapodistrian University of Athens, Greece.}
  \\
  O-joung Kwon\footnote{Department of Mathematics, Incheon National University, South Korea and Discrete Mathematics Group, Institute~for~Basic~Science~(IBS), Daejeon,~South~Korea.}
  \\
  Jean-Florent Raymond\footnote{CNRS, LIMOS, Université Clermont Auvergne, France.}
  \\
  Dimitrios M. Thilikos\thanks{LIRMM, Univ Montpellier, CNRS, Montpellier, France.}
}

\date{\empty}

\maketitle

\begin{abstract}
In 1990, Thomas proved that every graph admits a tree decomposition of minimum width that additionally satisfies a certain vertex-connectivity condition called \emph{leanness} [A~Menger-like property of tree-width: The finite case. \textit{Journal of Combinatorial Theory, Series B}, 48(1):67 – 76, 1990]. This result had many uses and has been extended to several other decompositions.
 In this paper, we consider tree-cut decompositions, that have been introduced by Wollan as a possible edge-version of tree decompositions [The structure of graphs not admitting a fixed immersion. \textit{Journal of Combinatorial Theory, Series B}, 110:47 – 66, 2015]. We show that every graph admits a tree-cut decomposition of minimum width that additionally satisfies an edge-connectivity condition analogous to Thomas' leanness.
\end{abstract}

\section{Introduction}

The notion of treewidth is a cornerstone of the theory of graph minors of Robertson and Seymour. Formally, a \emph{tree decomposition} of a graph $G$ is a pair $(T, \X)$ where $T$ is a tree and $\X = \{X_t \subseteq V(G),\ t \in V(T)\}$ is a collection of vertex sets, called \emph{bags}, with the following properties:
\begin{enumerate}
\item $\bigcup_{t \in V(T)} X_t = V(G)$;
\item every edge of $G$ belongs to some bag in $\X$; and
\item for every $u \in V(G)$, the set $\{t \in V(T),\ u \in X_t\}$ induces a connected subgraph of~$T$.
\end{enumerate}
The \emph{width} of $(T,\X)$ is $\max_{t \in V(T)} |X_t| - 1$ and the \emph{treewidth} of $G$, that we denote by $\tw(G)$, is defined as the minimum width of a tree decomposition of~$G$.

When writing \cite{robertson1990graph} (see {\cite[Theorem~2]{THOMAS199067}} and the introduction of \cite[Section~5]{robertson1990graph}), Robertson and Seymour proved that there exist tree decompositions of ``small'' width that satisfy a certain connectivity condition here called \emph{linkedness property}.

\begin{theorem}[\cite{robertson1990graph}]\label{th:rsexp}
  Every graph $G$ admits a tree decomposition $(T, \{X_t\}_{t \in V(T)})$ of width less than $3\cdot 2^{\tw(G)}$ such that the following holds:
  \begin{description}
  \item[\emph{Linkedness property:}] for every $k\in \N$ and $a,b \in V(T)$, either there are $k$ vertex-disjoint paths linking $X_a$ to $X_b$, or there is a node $c$ on the path of $T$ between $a$ and $b$ such that $|X_c| < k$.
  \end{description}
\end{theorem}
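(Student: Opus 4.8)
Put $w:=\tw(G)$. Since $w<3\cdot 2^{w}$, it suffices --- and is in fact stronger --- to produce a tree decomposition of $G$ of width exactly $w$ with the linkedness property. My plan is to first establish an edge-version of leanness and then pass to the stated node-version. Concretely, I aim at a tree decomposition $(T,\X)$ of width $w$ such that for all $a,b\in V(T)$ and all $Z_a\subseteq X_a$, $Z_b\subseteq X_b$ with $|Z_a|=|Z_b|=k$, either $G$ has $k$ pairwise disjoint $Z_a$--$Z_b$ paths, or some edge $tt'$ on the $a$--$b$ path of $T$ has $|X_t\cap X_{t'}|<k$. Granting this, I subdivide every edge $tt'$ of $T$ by a new node of bag $X_t\cap X_{t'}$: the width is unchanged, the result is readily seen to be a tree decomposition, and since every adhesion set of $(T,\X)$ now also occurs as a bag, a short case analysis on the two endpoints (distinguishing whether each is an original or a subdivision node) yields the linkedness property of the theorem.

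To construct the lean decomposition I would run the minimality argument of Thomas~\cite{THOMAS199067}. Call a tree decomposition \emph{reduced} if no bag is contained in a neighbouring bag; any tree decomposition can be made reduced by contracting tree-edges, without increasing the width. The set of reduced tree decompositions of $G$ of width at most $w$ is nonempty since $\tw(G)=w$; among them, pick $(T,\X)$ minimising lexicographically the vector $(n_{w+1},n_{w},\dots,n_{0})$, where $n_j$ is the number of bags of size~$j$. I claim $(T,\X)$ is lean. If not, fix a witness: nodes $a\neq b$, sets $Z_a\subseteq X_a$, $Z_b\subseteq X_b$ with $|Z_a|=|Z_b|=k$, no $k$ pairwise disjoint $Z_a$--$Z_b$ paths, yet $|X_t\cap X_{t'}|\ge k$ for every edge $tt'$ of the $a$--$b$ path $P=p_0p_1\cdots p_\ell$. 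By Menger's theorem $G$ has a separation $(A,B)$ with $Z_a\subseteq A$, $Z_b\subseteq B$ and $|A\cap B|<k$; among all of these, choose one with $|A\cap B|$ minimum, and subject to that with $A$ inclusion-minimal. Every bag along $P$ then has size at least $k>|A\cap B|$.

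The contradiction comes from \emph{uncrossing} $(A,B)$ along $P$. Traversing $P$, the separations of $G$ induced by the edges of $P$ cross $(A,B)$ repeatedly; combining $(A,B)$ with each of them via submodularity of the separation order produces a laminar family of separations, all of order $|A\cap B|$, and this allows one to rebuild $(T,\X)$ so that the path crosses the separation only once, through $A\cap B$. Roughly, I would double $P$ into two parallel copies --- one tracking the $A$-side, one the $B$-side --- adjoin the appropriate small separators to the bags along $P$, send each subtree hanging off $P$ to its side, and leave the rest of $(T,\X)$ intact. The minimality of $(A,B)$ is precisely what keeps the new object a tree decomposition of width $w$, while the (necessarily fat) bags along $P$ get replaced so that the vector $(n_{w+1},\dots,n_{0})$ strictly decreases --- contradicting the choice of $(T,\X)$.

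The step I expect to be the real obstacle is this uncrossing surgery: one must verify, with some care, that the rebuilt object is a genuine tree decomposition --- the connectivity axiom, where a vertex of $A\cap B$ may need to lie in both glued copies, is the delicate point --- that its width does not exceed $w$ (here one really uses the minimal choice of $(A,B)$ together with submodularity), and that the bag-size profile indeed drops. None of this is conceptually deep, but it is exactly where a patient bookkeeping with separations becomes unavoidable; see~\cite{THOMAS199067, robertson1990graph}.
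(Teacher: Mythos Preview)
The paper does not prove this theorem at all: \autoref{th:rsexp} is quoted from~\cite{robertson1990graph} purely as historical background, and the subsequent \autoref{th:thomas} (Thomas' optimal strengthening) is likewise only cited. The paper's own work begins with tree-cut decompositions, so there is no ``paper's proof'' to compare your proposal against.

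That said, your proposal is a reasonable sketch of the Bellenbaum--Diestel/Thomas argument for \autoref{th:thomas}, which, as you correctly observe, is strictly stronger than \autoref{th:rsexp}. Two remarks. First, you are not reproducing Robertson and Seymour's original argument (which genuinely only achieved the exponential bound $3\cdot 2^{\tw(G)}$); you are bypassing it via Thomas' later improvement. That is legitimate for the stated goal, but worth being explicit about. Second, the outline is sound --- the fatness-type potential, Menger, and the uncrossing surgery are exactly the right ingredients --- but the proposal stops at the point where all the work lies. The ``doubling $P$ into two parallel copies'' step, the verification that the rebuilt object satisfies the connectivity axiom for vertices in $A\cap B$, and the check that the lexicographic potential strictly drops are each nontrivial and are precisely where published proofs (e.g.\ \cite{bellenbaum2002two}) spend their pages. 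Your honest acknowledgement that this is ``the real obstacle'' is accurate; as written, the proposal is a correct roadmap rather than a proof.
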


That is, while Menger's theorem \cite{menger1927allgemeinen} states the existence of a set of less than $k$ vertices disconnecting $X_a$ from~$X_b$, \autoref{th:rsexp} additionally guarantees that such vertices that can be found as a bag of size less than $k$ of the tree decomposition.
The exponential bound in \autoref{th:rsexp} has been subsequently improved by Thomas to its optimal value. In fact, Thomas showed a stronger property called~\emph{leanness}.

\begin{theorem}[\cite{THOMAS199067}]\label{th:thomas}
  Every graph $G$ admits a tree decomposition $(T, \{X_t\}_{t \in V(T)})$ of width $\tw(G)$ such that the following holds:
  \begin{description}
  \item[\emph{Leanness property:}] for every $k\in \N$, $a,b \in V(T)$, $A\subseteq X_a$, and $B\subseteq X_{b}$ such that $|A| = |B| = k$, either there are $k$ vertex-disjoint paths linking $A$ to $B$, or there is a node $c$ on the path of $T$ between $a$ and $b$ such that $|X_c| < k$.
  \end{description}
\end{theorem}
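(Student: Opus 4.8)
The plan is the classical ``minimal counterexample'' strategy: single out an especially economical tree decomposition of minimum width and show that economy forces leanness. Concretely, among all tree decompositions of $G$ of width exactly $\tw(G)$ I would pick one, $(T,\X)$, that is \emph{smallest} in the sense that its multiset of bag sizes $\{|X_t|:t\in V(T)\}$ is lexicographically minimum when listed in non-increasing order (equivalently, one minimizes $\sum_t 2^{|X_t|}$). Two points need care here. First, for the minimum to be attained one should work within a suitably normalized class of decompositions whose number of relevant nodes is bounded in terms of $G$ (a standard reduction), while still permitting empty bags --- a lean decomposition may genuinely require them (think of three isolated vertices). Second, every cleanup operation used later --- contracting a \emph{redundant} edge $tt'$ with $X_t\subseteq X_{t'}$, suppressing an empty degree-$2$ node, deleting an empty leaf --- must be checked to leave the width unchanged and to not increase this measure; all of these do.

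Now suppose $(T,\X)$ is not lean and fix a violation: $k\in\N$, nodes $a,b$, sets $A\subseteq X_a$, $B\subseteq X_b$ with $|A|=|B|=k$, such that $G$ has no $k$ pairwise disjoint $A$--$B$ paths while every bag $X_c$ with $c$ on the $a$--$b$ path $P=t_0t_1\cdots t_m$ of $T$ satisfies $|X_c|\ge k$. By Menger's theorem there is an $A$--$B$ separator $S\subseteq V(G)$ with $|S|=:\ell<k$. Split $G$ as $G=G_A\cup G_B$ along $S$ by assigning each component of $G-S$ that meets $A$ (and, say, those meeting neither $A$ nor $B$) to the $A$-side; then $V(G_A)\cap V(G_B)=S$, there is no $G$-edge between $V(G_A)\setminus S$ and $V(G_B)\setminus S$, $A\subseteq V(G_A)$, and $B\subseteq V(G_B)$. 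Note that $(T,\{X_t\cap V(G_A)\}_t)$ and $(T,\{X_t\cap V(G_B)\}_t)$ are tree decompositions of $G_A$ and $G_B$.

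The heart of the argument is to rebuild these into a single tree decomposition $(T',\X')$ of $G$ of width at most $\tw(G)$ that is strictly smaller in the measure above, contradicting the choice of $(T,\X)$. Informally: take the $G_A$-restriction and the $G_B$-restriction, reglue them through a new node carrying the bag $S$ --- routing $S$ to that node along the images of the path $P$, and re-attaching each subtree hanging off $P$ to whichever side contains its private vertices --- and then run the cleanup (delete empty bags, contract redundant edges). What powers the drop in the measure is the arithmetic that at every node $t_i$ of $P$,
\[
 |X_{t_i}\cap V(G_A)| + |X_{t_i}\cap V(G_B)| \;=\; |X_{t_i}| + |X_{t_i}\cap S| \;\le\; |X_{t_i}| + \ell \;<\; 2|X_{t_i}|,
\]
since $|X_{t_i}|\ge k>\ell$: the two ``halves'' of each fat bag of $P$ cannot both be as large as the original, with a genuine decrease at bags straddling $S$, while every newly created bag (a copy of $S$, or a restricted bag) has size at most the old one; one also uses $\ell<k\le|X_a|\le\tw(G)+1$ to see the new $S$-bag does not exceed the width. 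Once this surgery is in place, leanness is immediate, since a smallest decomposition cannot admit a violation.

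I expect the surgery to be the main obstacle, for two intertwined reasons. The first is \emph{validity}: after restricting and regluing one must re-verify the interpolation axiom for every vertex --- in particular for the vertices of $S$, which have to end up in a connected family of bags spanning the new junction, and for vertices whose original bags were cut by $S$. The second, and subtler, is \emph{width control}: forcing $S$ into a common bag on each side could a priori enlarge bags; the escape is to route $S$ only through the images of the path $P$, using precisely that every bag of $P$ is fat ($\ge k>\ell$), so that after intersecting with $V(G_A)$ or $V(G_B)$ there is enough room to reinsert $S$ without exceeding $\tw(G)+1$. Making validity, width control, and the strict decrease of the measure hold simultaneously --- and, back at the start, choosing a measure robust enough that the closing cleanup cannot undo the gain --- is where the real work lies.
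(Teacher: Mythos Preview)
This theorem is not proved in the paper; it is quoted from Thomas~\cite{THOMAS199067} as background, with the remark that Bellenbaum and Diestel~\cite{bellenbaum2002two} later gave a short proof. Your proposal is precisely the Bellenbaum--Diestel strategy, and it is also the template the present paper follows for its own main result (\autoref{pluralism}) in the tree-cut setting: minimize a fatness-type potential over optimum-width decompositions, take a minimal non-leanness certificate, cut along a minimum separator, and rebuild a decomposition of strictly smaller potential.

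The outline is right, but the surgery has a genuine gap exactly where you flag it. You assert that because every bag on $P$ satisfies $|X_{t_i}|\ge k>\ell$, ``there is enough room to reinsert $S$'' after restricting; however
\[
\bigl|(X_{t_i}\cap V(G_A))\cup S\bigr| \;=\; |X_{t_i}| + \ell - |X_{t_i}\cap V(G_B)|,
\]
and $|X_{t_i}|\ge k$ alone does not force $|X_{t_i}\cap V(G_B)|\ge \ell$, so the bag can strictly grow past $|X_{t_i}|$ (take $k=4$, $\ell=3$, and $X_{t_i}$ split $2{+}2$ with $X_{t_i}\cap S=\emptyset$). Your displayed inequality $|X_{t_i}\cap V(G_A)|+|X_{t_i}\cap V(G_B)|<2|X_{t_i}|$ is the engine of the \emph{fatness} drop, but it is not what bounds the \emph{width}. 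Similarly, ``re-attaching each subtree hanging off $P$ to whichever side contains its private vertices'' is not well-defined, since such a subtree may contain vertices of both $V(G_A)\setminus S$ and $V(G_B)\setminus S$. The Bellenbaum--Diestel construction avoids both issues: it keeps two full copies of $T$ (one per side, with bags $X_t\cap V(G_A)$ and $X_t\cap V(G_B)$), chooses a single edge of $P$ at which to glue, and controls width by comparing $S$ with the adhesion $X_{t_i}\cap X_{t_{i+1}}$ there rather than with the bag sizes along~$P$. That choice of gluing edge, and the accompanying width argument, is the missing idea in your sketch.
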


A simplified proof of \autoref{th:thomas} was then found by Diestel and Bellenbaum~\cite{bellenbaum2002two}.
The aim of Robertson and Seymour was to use \autoref{th:rsexp} as an ingredient in their proof that graphs of bounded treewidth are well-quasi-ordered by the minor relation~\cite{robertson1990graph}.
Since then, the notions of leanness and linkedness have been extensively studied and extended to several
different width parameters such as  $\theta$-tree-width \cite{CarmesinDHH14, geelen2016generalization}, pathwidth \cite{Lagergren98}, directed path-width \cite{KimS15},  DAG-width~\cite{Kintali14},  rank-width~\cite{OUM200579}, linear-rankwidth~\cite{KanteK14}, profile- and block-width \cite{ERDE2018114}, matroid treewidth \cite{GeelenGW02bran, AZZATO2011123, ERDE2018114} and matroid branchwidth~\cite{GeelenGW02bran}.
They have important applications, for instance in order to bound the size of obstructions for certain classes of graphs~\cite{seymour1993bound, Lagergren98, geelen2002branch, GiannopoulouPRTW2016}, in well-quasi-ordering proofs \cite{oum2008rank, liu2014graph, GEELEN2002270}, in extremal graph theory \cite{oporowski1993typical, chudnovsky2011edge}, and for algorithmic purpose~\cite{2018arXiv181006864C}. We refer to \cite{ERDE2018114} for a unified introduction to lean decompositions.

In this paper, we show that a similar leanness property holds for \emph{tree-cut width}.
Tree-cut width is a graph invariant introduced by Wollan
in~\cite{Wollan201547} and defined via graph decompositions called \emph{tree-cut decompositions}.
Several results are supporting the claim that tree-cut width would be
the \emph{right} parameter for studying graph immersions. For instance,
there is an analog to the Grid-minor Exclusion Theorem of Robertson
and Seymour \cite{RobertsonS86GMV} in the setting of immersions~\cite{Wollan201547}.
Also, tree-cut decompositions can be used for dynamic programming in
the same way as tree decompositions do, for certain algorithmic
problems that cannot be tackled under the bounded-treewidth
framework~\cite{ganian2015, KimOPST18, Giannopoulou2016linear}.
Therefore, we expect that this invariant
will play a central role in the flourishing theory of graph
immersions.

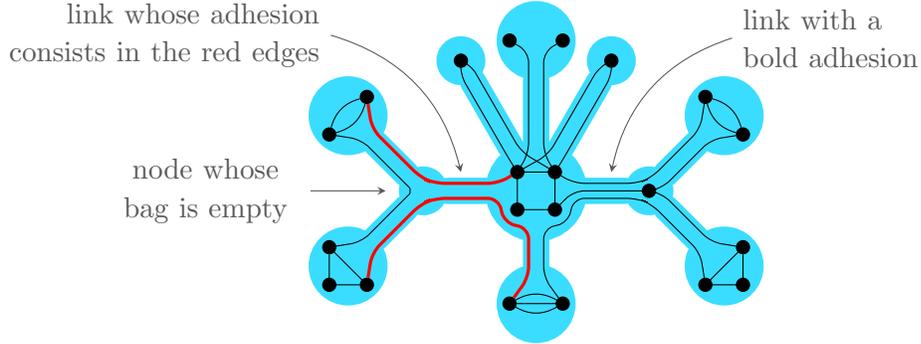
\begin{figure}[h]
  \centering
  \begin{tikzpicture}[scale = 1] 

    \begin{scope}[xshift = 8cm, yshift = -0.5cm]
      \begin{scope}[every node/.style ={black node, fill = colorC}, every path/.style = {line width = 0.35cm, color = colorC, line cap=round}]
        \draw (0,0) node[minimum size = 1cm] (middle) {};
        \draw (120:2) node[minimum size = 0.3cm] (top1) {} (90:2) node[minimum size = 0.7cm] (top2) {} (60:2) node[minimum size = 0.3cm] (top3) {};
        \draw (top1) -- (middle) (top2) -- (middle) (top3) -- (middle);
        \draw (-1.5, 0) node[minimum size = 0.3cm] (leftm) {};
        \draw (-2.5, 1) node[minimum size = 0.7cm] (leftt) {};
        \draw (-2.5, -1) node[minimum size = 0.7cm] (leftb) {};
        \draw (middle) -- (leftm) (leftb) -- (leftm) -- (leftt);
        \draw (1.5, 0) node[minimum size = 0.3cm] (rightm) {};
        \draw (2.5, 1) node[minimum size = 0.7cm] (rightt) {};
        \draw (2.5, -1) node[minimum size = 0.7cm] (rightb) {};
        \draw (middle) -- (rightm) (rightb) -- (rightm) -- (rightt);
        \draw (0, -1.5) node[minimum size = 0.7cm] (bottom) {};
        \draw (middle) -- (bottom);
      \end{scope}
      \begin{scope}[every node/.style= black node]
        \draw (0.25,-0.25) node (n14) {} -- ++(-0.5,0) node (n15) {} -- ++(0,0.5) node (n16) {} --++ (0.5,0) node (n17) {} -- cycle;
        \draw ($(leftt.center) + (0.25, 0.25)$) node (n1) {}
        ($(leftt.center) + (-0.25, -0.25)$) node (n2) {};
        \draw (n1) to[bend right] (n2) (n1) to[bend left] (n2); 
        \draw ($(leftb.center) + (-0.25, 0.25)$) node (n3) {} --
        ($(leftb.center) + (0.25, -0.25)$) node (n5) {};
        \draw (n3) -- ++(0,-0.5) node (n4) {} -- (n5); 
        \draw ($(bottom.center) + (-0.3536,0)$) node (n6) {} --
        ($(bottom.center) + (0.3536,0)$) node (n7) {};
        \draw (n6) to[bend left] (n7) (n6) to[bend right] (n7); 
        \draw ($(rightb.center) + (-0.25, -0.25)$) node (n8) {} --
        ($(rightb.center) + (0.25, 0.25)$) node (n10) {};
        \draw (n8) -- ++(0.5,0) node (n9) {} -- (n10); 
        \draw (rightm.center) node (n13) {}
        ($(rightt.center) + (0.25,-0.25)$) node (n11) {}
        ($(rightt.center) + (-0.25,0.25)$) node (n12) {};
        \draw (n11) to[bend left] (n12) (n11) to[bend right] (n12); 
        \draw (top1) node (t1) {};
        \draw (top2) ++(-0.3536,0) node (t2a) {};
        \draw (top2) ++(0.3536,0) node (t2b) {};
        \draw (top3) node (t3) {};
        \draw[rounded corners, very thick, color = red] (n16) --
        ($(middle.center) + (-180:0.5) + (0,0.1)$) --
        ($(leftm.center) + (0:0.15) + (0,0.1)$) --
        ($(leftm.center) + (135:0.15) + (45:0.1)$) --
        ($(leftt.center) + (-45:0.35) + (45:0.1)$) --
        (n1);
        \draw[rounded corners] (n2) --
        ($(leftt.center) + (-45:0.35) + (225:0.1)$) --
        ($(leftm.center) + (135:0.15) + (225:0.1)$);
        \draw ($(leftm.center) + (135:0.15) + (225:0.1)$) --
        ($(leftm.center) + (225:0.15) + (135:0.1)$);
        \draw[rounded corners] ($(leftm.center) + (225:0.15) + (135:0.1)$) -- 
        ($(leftb.center) + (45:0.35) + (135:0.1)$) --
        (n3);
        \draw[rounded corners, very thick, color = red] (n5) --
        ($(leftb.center) + (45:0.35) + (-45:0.1)$) --
        ($(leftm.center) + (225:0.15) + (-45:0.1)$) --
        ($(leftm.center) + (0:0.15) + (0,-0.1)$) --
        ($(middle.center) + (180:0.5) + (0,-0.1)$) --
        ($(middle.center) + (225:0.6)$) --
        ($(middle.center) + (-90:0.5) + (-0.1,0)$) --
        ($(bottom.center) + (90:0.35) + (-0.1,0)$) --
        (n6);
        \draw[rounded corners] (n7) --
        ($(bottom.center) + (90:0.35) + (0.1,0)$) --
        ($(middle.center) + (-90:0.5) + (0.1,0)$) --
        ($(middle.center) + (-45:0.6)$) --
        ($(middle.center) + (0:0.5) + (0,-0.1)$) --
        ($(rightm.center) + (180:0.15) + (0,-0.1)$) --
        ($(rightm.center) + (-45:0.15) + (225:0.1)$) --
        ($(rightb.center) + (135:0.35) + (225:0.1)$) --
        (n8);
        \draw[rounded corners] (n10) --
        ($(rightb.center) + (135:0.35) + (45:0.1)$) --
        ($(rightm.center) + (-45:0.15) + (45:0.1)$) --
        (n13) --
        ($(rightm.center) + (45:0.15) + (-45:0.1)$) --
        ($(rightt.center) + (225:0.35) + (-45:0.1)$) --
        (n11);
        \draw[rounded corners] (n12) --
        ($(rightt.center) + (225:0.35) + (135:0.1)$) --
        ($(rightm.center) + (45:0.15) + (135:0.1)$) --
        ($(rightm.center) + (180:0.15) + (0,0.1)$) --
        ($(middle.center) + (0:0.5) + (0,0.1)$) --
        (n17);
        \draw[rounded corners] (n13) --
        ($(rightm.center) + (180:0.15)$) --
        ($(middle.center) + (0:0.5)$) --
        (n14);
        \draw[rounded corners] (n16) --
        ($(middle.center) + (120:0.5) + (210:0.1)$) --
        ($(top1.center) + (-60:0.15) + (210:0.1)$) --
        (t1) --
        ($(top1.center) + (-60:0.15) + (60:0.1)$) --
        ($(middle.center) + (120:0.5) + (60:0.1)$) --
        (n17);
        \draw[rounded corners] (n16) --
        ($(middle.center) + (90:0.5) + (180:0.1)$) --
        ($(top2.center) + (-90:0.15) + (180:0.1)$) --
        (t2a) (t2b) --
        ($(top2.center) + (-90:0.15) + (0:0.1)$) --
        ($(middle.center) + (90:0.5) + (0:0.1)$) --
        (n17);
        \draw[rounded corners] (n16) --
        ($(middle.center) + (60:0.5) + (150:0.1)$) --
        ($(top3.center) + (-120:0.15) + (150:0.1)$) --
        (t3) --
        ($(top3.center) + (-120:0.15) + (-30:0.1)$) --
        ($(middle.center) + (60:0.5) + (-30:0.1)$) --
        (n17);
      \end{scope}
      \begin{scope}[color = black!66]

      \draw[-stealth] (-3, 0) node[normal, anchor = east, text width = 2.5cm,
      align = center]  {\small node whose\\ bag is empty} -- (-2,0);
      \draw (-5.5, 1.5) node[normal, anchor = south, text width = 5.25cm, align
      = right] (cap2) {\small link whose adhesion\\ consists in the red edges};
      \draw[-stealth] (cap2.east) to[bend left] (-1, 0.25);
      
      \draw (4, 1.5) node[normal, anchor = south, text width = 2.5cm, align
      = left] (cap3) {\small link with a bold adhesion};
      \draw[-stealth] (cap3.west) to[bend right] (1, 0.25);
    \end{scope}
    \end{scope}

    \end{tikzpicture}
  \caption[Example of a tree-cut decomposition.]{Representation of a
    tree-cut decomposition. The tree of
    the decomposition is depicted in blue and the graph is draw in
    black on top of it to specify which vertices (resp. edges) belong
    to which bags (resp.\ adhesions).}
  \label{fig:tcex}
\end{figure}

Formally, a \emph{tree-cut decomposition} of
a graph $G$ is a pair $\mathcal{D} = (T, \X)$ where $T$ is a tree and
$\X = \{X_t \subseteq V(G),\ t \in V(T)\}$ is a collection of
disjoint vertex sets, called \emph{bags}, with the property that
$\bigcup_{t \in V(T)} X_t = V(G)$.\footnote{In other words, $\{X_t,\ t \in
  V(T)\}$ is a partition of $V(G)$ plus possibly some empty sets.}
See \autoref{fig:tcex} for an example.
To avoid confusion with the vertices or edges of $G$, we respectively
use the synonyms \emph{nodes} and \emph{links} when we refer to the
vertices and edges of the tree of a tree-cut decomposition.
Let $uv$ be a link of $T$, let $T_{uv}$ and $T_{vu}$ be the two connected
components of $T - uv$, let $X^T_{uv} = \bigcup_{t \in V(T_{uv})}X_t$ and symmetrically for~$X^T_{vu}$.
The \emph{adhesion} $\adh_{\mathcal{D}}(uv)$ of the link $uv$ is defined as the set
of edges of $G$ with one endpoint in $X^T_{uv}$ and the other one in
$X^T_{vu}$. We drop the superscript when it is clear from the context.
We say that an adhesion is \emph{bold} if it has size more than two.
Then the width of the decomposition $\mathcal{D}$ is defined as:
\begin{align*}
  \width(\mathcal{D}) =& \max \left \{ \max_{e \in E(T)} |\adh(e)|,\right.\\
&\hphantom{\max \left \{ \right.}\left.    \max_{t \in V(T)} \left (|X_t| + |\{ t' \in N_T(t),\ \adh(tt')\
      \text{is bold} \}| \right ) \right \},
\end{align*}
where $N_T(t)$ denotes the set of nodes of $T$ that are
adjacent to~$t$.
The tree-cut width of $G$ is the defined as the minimum width of a
tree-cut decomposition of it.
We note that this definition differs from the original definition of
Wollan in~\cite{Wollan201547}, however the two definitions have been proved to be equivalent
in~\cite{Giannopoulou2016linear}.
Our definition of leanness for tree-cut decompositions
is a transposition to the edge setting of the leanness notion of Thomas.

\begin{definition}[leanness property for tree-cut decompositions]\label{boundless}
  A tree-cut decomposition $(T, \X)$ is said to be \emph{lean} if for
  every $k\in \N$, every $a,b \in E(T)$, and every $A\subseteq
  \adh(a), B \subseteq \adh(b)$ such that $|A|=|B|=k$, one of the following holds:
\begin{itemize}
\item there are $k$ edge-disjoint paths linking $A$ to $B$; or
\item there is a link $c$ on the path of $T$ between $a$ and $b$ such that $|\adh(c)|<k$.
\end{itemize}
\end{definition}

Notice that Thomas' notion of leanness for tree decompositions relates vertex-disjoint
paths to vertex-separators given by the decomposition, while ours links
edge-disjoint paths to edge-separators.
A related notion of linkedness has been previously studied
in \cite{GiannopoulouPRTW2016} in the simpler setting of cutwidth
orderings.
Our main result is the following.

\begin{theorem}\label{pluralism}
  Every graph $G$ admits a tree-cut decomposition of width $\tcw(G)$ that is~lean.
\end{theorem}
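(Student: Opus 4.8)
The plan is to adapt the exchange argument of Diestel and Bellenbaum~\cite{bellenbaum2002two} for \autoref{th:thomas} to the edge setting.

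\emph{Step 1: a well-founded order on minimum-width decompositions.} First I would restrict attention to \emph{reduced} tree-cut decompositions of width $\tcw(G)$, meaning that no empty bag sits at a leaf or at a degree-$2$ node of $T$: such a bag can be deleted, respectively contracted away, without increasing the width (a short check from the width formula, using that the two links around an empty degree-$2$ node carry the same adhesion). Since the non-empty bags of a tree-cut decomposition are pairwise disjoint, a reduced decomposition has $O(|V(G)|)$ nodes and all its adhesions have size at most $\tcw(G)$; hence there are finitely many reduced decompositions of width $\tcw(G)$ up to isomorphism. Among them I would fix one, $\mathcal{D} = (T, \X)$, minimising a potential such as $\Phi(\mathcal{D}) = \sum_{e \in E(T)} 2^{|\adh(e)|}$; the only feature of $\Phi$ that matters is that it strictly decreases whenever some adhesion shrinks or some link disappears and no adhesion grows.

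\emph{Step 2: a minimal counterexample to leanness.} Suppose $\mathcal{D}$ is not lean and fix a witnessing quadruple: $k \in \N$, links $a, b \in E(T)$, and sets $A \subseteq \adh(a)$, $B \subseteq \adh(b)$ with $|A| = |B| = k$, such that $G$ has no $k$ edge-disjoint $A$–$B$ paths and every link $c$ on the $a$–$b$ path of $T$ has $|\adh(c)| \ge k$. Among all such quadruples I would take one with $\dist_T(a,b)$ as small as possible. By the edge-connectivity version of Menger's theorem (for two prescribed sets of edges), the absence of $k$ edge-disjoint $A$–$B$ paths yields an edge set $F \subseteq E(G)$ with $|F| < k$ meeting every $A$–$B$ path; equivalently a bipartition $(P,Q)$ of $V(G)$ with $\delta_G(P) = F$ separating $A$ from $B$. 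I would then rebuild $\mathcal{D}$: keep the part of $T$ lying ``on the $A$-side'' essentially untouched, replace the part ``on the $B$-side'' by its restriction to one side of the cut — which only intersects bags, so it can only shrink bags, adhesions, and the quantities $\#\{t' \in N_T(t): \adh(tt') \text{ bold}\}$ — and splice in a single new link carrying the adhesion $F$ in place of some link $c$ on the $a$–$b$ path, recalling that $|\adh(c)| \ge k > |F|$. The new pair still decomposes $G$ (bags were only intersected, so they still partition $V(G)$), has width at most $\tcw(G)$, and has traded a link of adhesion $\ge k$ for one of size $< k$ while growing no adhesion; hence (after re-reducing) $\Phi$ strictly dropped, contradicting the choice of $\mathcal{D}$. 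Minimality of $\dist_T(a,b)$, together with a variant of this surgery, covers the cases where $a=b$ or where the cut does not ``sit on'' the path.

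\emph{Where the difficulty lies.} The crux is the surgery in Step 2. For ordinary tree decompositions the analogous step is comparatively painless because every adhesion $V_s \cap V_t$ lies inside a bag, so ``no bag grows'' immediately gives ``no adhesion grows''. Here adhesions are edge sets not controlled by bag sizes, so cutting $G$ along $F$ and regluing can \emph{a priori} enlarge the adhesion of a link on the far side by up to $|F|$. Making the surgery adhesion-non-increasing is the real work: one must choose $F$ with care — essentially by uncrossing a minimum $A$–$B$ edge cut against the adhesions of the links along the $a$–$b$ path so that it becomes compatible with the decomposition — and one must also track the coupled term $|X_t| + \#\{\text{bold neighbours of }t\}$ at the gluing node, since the new link of size $< k$ may or may not be bold and could, if one is careless, push a tight node over its budget. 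The remaining ingredients — the edge version of Menger's theorem for edge sets, the reduction to finitely many reduced decompositions, and the degenerate $a=b$ case — I expect to be routine.
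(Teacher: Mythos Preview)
Your plan follows the right template, but the surgery in Step~2 does not actually yield a tree-cut decomposition, and once you fix it your potential $\Phi$ will no longer do the job. In a tree-cut decomposition the bags partition $V(G)$, so you cannot ``keep the $A$-side untouched and intersect the $B$-side with $Q$'': vertices of $P$ that happened to sit in bags on the $B$-side of $c$ are then in no bag at all. The repair that works (and is what the paper does) is to take \emph{two} full copies of $T$, intersect all bags of the first copy with $P$ and all bags of the second with $Q$, and join the two copies by a single link whose adhesion is exactly $F$. This is a valid decomposition, and the key technical lemma is that for every original link $e$, each of its two copies has adhesion of size at most $|\adh(e)|$; moreover, if one copy matches $|\adh(e)|$ exactly then the other copy's adhesion is contained in $F$ (this is where the careful choice of $F$ enters: among minimum $(A,B)$-cuts one takes the one closest to the path $aTb$, which makes the uncrossing arguments go through).

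The doubling, however, kills $\Phi=\sum_e 2^{|\adh(e)|}$: a link $e$ with $|\adh(e_1)|=|\adh(e)|$ and $\emptyset\neq\adh(e_2)\subseteq F$ contributes strictly more after surgery than before, and re-reducing does not remove these links in general. The paper instead uses the lexicographic tuple $(\alpha_m,-\beta_m,\dots,\alpha_1,-\beta_1)$, where $\alpha_i$ counts links of adhesion $\ge i$ and $\beta_i$ counts components of the subforest they induce. The $-\beta_i$ terms are not cosmetic: when $a$ and $b$ are equal or incident there need be no ``$k$-excessive'' link (one whose both copies strictly shrink), so no $\alpha_i$ drops, and it is only the increase in the number of components at level $l=\min(|\adh(a)|,|\adh(b)|)$ that makes the potential go down. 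Your sketch waves this case away. Finally, the paper first reduces to $3$-edge-connected graphs so that every adhesion is bold and the width formula collapses to $\max(|\adh(e)|,\ |X_t|+\deg_T(t))$; without this reduction, controlling the bold-neighbour count at the two new degree-$3$ nodes created by the gluing is an extra nuisance that your outline does not address.
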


This result can be used to give explicit upper-bounds on the size of the
immersion-obstructions of graphs of bounded tree-cut
width\footnote{We refer here, for every $k\in \N$, to the immersion-minimal graphs that
  have tree-cut width more than~$k$.}, a result
that we postpone to a future paper (see \cite{Giannopoulou2018lean}
for an extended abstract containing both results).

\section{Preliminaries}
\label{trickster}

Given two integers $a,b$ we denote by $[a,b]$ the set $\{a,\ldots,b\}$ and by $[a]$ the set $\{1,\ldots,a\}$.

\paragraph*{Graphs}
Unless otherwise specified, we follow standard graph theory
terminology; see e.g.~\cite{Diestel05grap}.
All graphs considered in this paper are finite, undirected, without loops, and may have
multiple edges.
The vertex set of a graph $G$ is denoted by $V(G)$ and its multiset
of edges by~$E(G)$.
For a subset of vertices $S\subseteq V(G)$, $G-S$ is the induced subgraph $G[V(G) \setminus S]$.
For a subset $F\subseteq E(G)$ of edges, $G-F$ is the subgraph $(V(G), E(G) \setminus F$). The subgraph of $G$ \emph{induced} by $F$ has the set of endpoints of edges in $F$  as vertex set and $F$ as edge set.

A path of $G$ \emph{links} two edges of $G$ if it starts with one and ends with the other.
Given two sets $A$ and $B$ of edges of $G$, we say that a path $P$ \emph{links} $A$ and $B$ if
it starts with an edge of $A$, ends with an edge of $B$, and none of its internal edges belong to~$A \cup B$. In particular, the path reduced to a single edge $e \in A \cap B$ links $A$ and~$B$.

A \emph{cut} in a graph $G$ is a set $F \subseteq E(G)$ such that $G -
F$ has more connected components than~$G$.
If $A,B\subseteq E(G)$, we say that $F$ is an $(A,B)$-cut if no path
links $A$ and $B$ in $G-F$. In particular, $A \cup B$ is an
$(A,B)$-cut.

For two subsets $X,Y \subseteq V(G)$, we denote by $E_G(X, Y)$ the set
of all edges $xy \in E(G)$ for which $x\in X$ and $y \in Y$.
For $k\in \N$, a graph is said to be \emph{$k$-edge-connected} if it
has no cut on (strictly) less than $k$ edges.

\paragraph*{Tree-cut decompositions}

Let $G$ be a graph and let $(T, \X=\{X_t\}_{t \in V(T)})$ be a
tree-cut decomposition of $G$, as defined in the introduction.
For any nodes $u,v \in V(T)$, we denote by $uTv$ the (unique) path of $T$ with
endpoints $u$ and $v$. Similarly, if $e,f \in E(T)$, we denote by
$eTf$ the (unique) path of $T$ starting with $e$ and ending with~$f$.
Notice that if $G$ is 3-edge-connected, then every link of $T$ has a
bold adhesion. In this case the definition of the width of $(T,\X)$
can be simplified to
\begin{equation}\label{eq:tcwdeg}
\max \left \{ \max_{e \in E(T)} |\adh(e)|,\
    \max_{t \in V(T)} \left (|X_t| + \deg_T(t) \right ) \right \}.
\end{equation}
When a tree-cut decomposition is not lean (see \autoref{boundless}),
this is witnessed by what we call a non-leanness certificate, defined
as follows.

\begin{definition}[non-leanness certificate]\label{forbidden}
Let $(T, \X)$ be a tree-cut decomposition of a graph~$G$.
A \emph{non-leanness certificate} for $(T,\X)$ is a quintuple
$(k,a,b,A,B)$ where $k\in \N_{\geq 1}$, $a$ and $b$ are links
of $T$ and $A$ and $B$ are sets of edges of $G$ where $A\subseteq \adh(a), B \subseteq \adh(b)$ and $|A|=|B|=k$, such that the following two conditions hold:
\begin{enumerate}[(A)]
\item \label{distanced} there is no collection of $k$ edge-disjoint paths linking $A$ to $B$ and
\item \label{commanded} every link $e$ in $aTb$ satisfies $|\adh(e)|\geq k$. 
\end{enumerate}
A \emph{minimal non-leanness certificate} of $G$ is a non-leanness
certificate of the form $(k, a, b, A,B)$ (for some $k,a,b,A,B$ as
above) such that, among all non-leanness certificates of $G$, the value
of $k$ is minimum and, subject to that, the distance between $a$ and
$b$ is minimum (possibly $a = b$).
\end{definition}

In \autoref{sec:thinghood} we show how a non-leanness certificate can
be used in order to gradually improve a tree-cut decomposition towards leanness.
We now define the operation that we use for these improvement steps.

\begin{figure}[h]
  \centering
  \begin{tikzpicture}[scale = 0.45, every node/.style = medbag]
  \begin{scope}
    \draw (0,0) node (b0) {}
    -- ++(2,0) node (b2) {}
    node[normal,midway, yshift = 0.25cm] {$a$}
    -- ++(2,0) node {}
    -- ++(2,0) node {}
    -- ++(2,0) node (b1) {}
    node[normal,midway, yshift = 0.25cm] {$b$}
    (b1) -- ++(1,1) node {}
    (b1) -- ++(1,-1) node {}
    (b1) -- ++(2,0) node {}
    (b0) -- ++(-1,1) node {}
    (b0) -- ++(-1,-1) node {}
    (b2) -- ++(1,1) node {};
  \end{scope}
  
  \begin{scope}[xshift = 15cm, yshift = 2.5cm]

    \draw[every node/.style = {medbag, fill = colorC}]
    (0,0) node (b0) {}
    -- ++(2,0) node (b2) {}
    node[normal,midway, yshift = 0.25cm] {$a_1$}
    -- ++(2,0) node {}
    -- ++(2,0) node {}
    -- ++(2,0) node[label=-90:$s_1$] (uf) {{\small $\emptyset$}}
    node[midway,normal, yshift = 0.25cm] {$b_1$}
    -- ++(2,0) node (b1) {}
    node[midway,normal, yshift = 0.25cm] {$b_1'$}
    (b1) -- ++(1,1) node {}
    (b1) -- ++(1,-1) node {}
    (b1) -- ++(2,0) node {}
    (b0) -- ++(-1,1) node {}
    (b0) -- ++(-1,-1) node {}
    (b2) -- ++(1,1) node {};
    \draw[yshift = -5cm, every node/.style = {medbag, fill = colorD}]
    (0,0) node (b0) {}
    -- ++(2,0) node[label=90:$s_2$] (ue) {{\small $\emptyset$}}
    node[midway,normal, yshift = -0.25cm] {$a_2'$}
    -- ++(2,0) node (b2) {}
    node[midway,normal, yshift = -0.25cm] {$a_2$}
    -- ++(2,0) node {}
    -- ++(2,0) node {}
    -- ++(2,0) node (b1) {}
    node[normal,midway, yshift = -0.25cm] {$b_2$}
    (b1) -- ++(1,1) node {}
    (b1) -- ++(1,-1) node {}
    (b1) -- ++(2,0) node {}
    (b0) -- ++(-1,1) node {}
    (b0) -- ++(-1,-1) node {}
    (b2) -- ++(1,1) node {};
    \draw (uf) -- (ue);
  \end{scope}
  \draw[-stealth, very thick] (11.5,0) -- ++(1,0);
\end{tikzpicture}

  \caption{A tree-cut decomposition of a graph $G$ (left) and its $(a,b,V_1, V_2)$-segregation (right), for some partition $(V_1, V_2)$ of its vertex set. The vertices of $V_1$ and $V_2$ respectively lie in blue and green bags. Newly introduced bags, corresponding to nodes $s_{1},s_{2}$, are empty. The adhesion of $s_1s_2$ is exactly~$E_G(V_1, V_2)$.}
  \label{drugstorefig}
\end{figure}
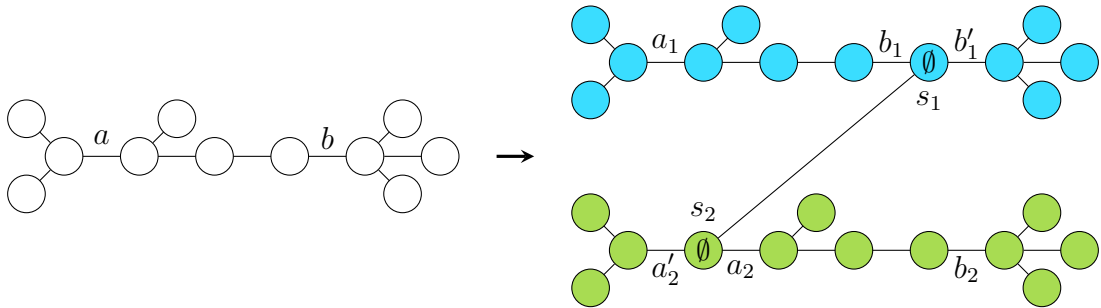

\begin{definition}[segregation of a tree-cut decomposition]\label{possesses}
  Let $(T, \X)$ be a tree-cut decomposition of a graph $G$, let $a,b
  \in E(T)$, and let $(V_1,V_2)$ be a partition of $V(G)$.
We define the \emph{$(a, b, V_1, V_2)$-segregation} of $(T, \X)$ as the pair $(U, \mathcal{Y})$ obtained as follows:
\begin{enumerate}
\item consider a first copy $U_1$ of $T$, subdivide once the link corresponding to $b$, call $s_1$ the subdivision node, and call the two created links $b_1$ and $b_1'$, with the convention that (the copy of) $a$ is closer to $b_1$ in $U_{1}$ (if $a=b$, choose arbitrarily);
\item symmetrically, consider a second copy $U_2$ of $T$, subdivide once the link corresponding to $a$, call $s_{2}$ the subdivision node, and call the two created links $a_2$ and $a_2'$, with the convention that (the copy of) $b$ is closer to $a_2$ in $U_{2}$, or, if $a=b$, coherently as the previous step;
\item in the disjoint union of $U_1$ and $U_2$, add a link joining $s_{1}$ and $s_{2}$: this gives $U$;
\item for every $t \in V(U)$, let $Y_t = 
\begin{cases}
X_t \cap V_1 & \text{if } t\in V(U_1)\setminus \{s_{1}\}\\
 X_t \cap V_2 & \text{if } t\in V(U_2)\setminus \{s_{2}\}, \text{ and }\\
 \emptyset & \text{if } t\in\{s_{1},s_{2}\}.
 \end{cases}$
\end{enumerate}
\end{definition}

An example of a segregation is presented in \autoref{drugstorefig}.
The following remark follows from the definition of a segregation.
\begin{remark}\label{reference}
Any segregation of a tree-cut decomposition of a graph is a tree-cut
decomposition of this graph.
\end{remark}

In order to ensure that the aforementioned improvement steps
eventually lead to a lean tree-cut decomposition, we use the following
notion.
\begin{definition}[fatness]\label{def:fatness}
Let $G$ be a graph on $m$ edges and let
  $(T, \X)$ be a tree-cut decomposition of $G$. For every $i \in
  [m]$, we denote by $T^{\geq i}$ 
the subgraph of $T$ induced by
the links that have an adhesion of size at least~$i$.
  The {\em fatness} of $(T, \X)$ is defined as the $(2m)$-tuple

 \[
   (\alpha_m, -\beta_m,\alpha_{m-1},-\beta_{m-1}, \dots, \alpha_1, -\beta_1),
 \]
where $\alpha_{i}$ is the number of links of
  $T^{\geq i}$ and $\beta_{i}$ is the number of connected components of~$T^{\geq i}$.
We order fatnesses by lexicographic order.
\end{definition}

The following is a slight variant of Menger's Theorem that we use in the next section.
\begin{lemma}\label{testament}
  Let $G$ be a graph, $A,B \subseteq E(G)$, and $k
  \in \N$. 
Then either there is a set of $k$ pairwise edge-disjoint paths of
linking $A$ and~$B$ in $G$, or $G$ has an $(A, B)$-cut of size less than $k$.
\end{lemma}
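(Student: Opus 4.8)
The plan is to reduce this edge-version of Menger's theorem to the classical vertex-version (or arc-version on digraphs) by a routine gadget construction, and then translate the conclusion back. First I would handle the trivial degenerate cases: if $A \cap B \neq \emptyset$ and $k = 1$, the single-edge path does the job; more generally one should be slightly careful about paths of length~$0$ or~$1$, since the definition in the preliminaries explicitly allows a path reduced to a single edge $e \in A \cap B$ to link $A$ and~$B$. So the statement to be proved is genuinely about edge-disjoint (not internally-disjoint) paths between two \emph{edge} sets, with the convention on internal edges spelled out just above the lemma.

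The main step is the construction of an auxiliary graph $H$ in which ``edge-disjoint paths linking $A$ to $B$ in $G$'' correspond bijectively to ``internally-vertex-disjoint paths between two fixed vertices in $H$''. The standard way: take the line-graph-like incidence structure, or more concretely, build the digraph $H$ whose vertices are the edges of $G$ together with a source $s$ and a sink $t$; put an arc from $e$ to $f$ whenever $e$ and $f$ share an endpoint and the path may legitimately pass from $e$ to $f$ (i.e.\ they are consecutive edges of a walk, with the orientation bookkeeping that stops a path from re-using the shared vertex in a way that would merge two distinct $G$-paths); add arcs $s \to e$ for each $e \in A$ and $f \to t$ for each $f \in B$, with the internal-edge condition encoded by deleting the $e$-vertices for $e \in A \cup B$ except as entry/exit points. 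Then apply the vertex form of Menger's theorem (which we may invoke freely, as it is classical) to $s$ and $t$ in $H$: either there are $k$ internally-disjoint $s$--$t$ paths, which unwind to $k$ edge-disjoint paths linking $A$ and $B$ in $G$, or there is an $s$--$t$ vertex separator of size less than $k$, whose corresponding set of edges of $G$ is an $(A,B)$-cut of size less than~$k$.

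The step I expect to be the main (if modest) obstacle is getting the gadget exactly right so that the two notions of ``path'' match up with the paper's precise definitions — in particular, ensuring that a separator in $H$ really yields a subset of $E(G)$ that is an $(A,B)$-cut in the sense defined (no path links $A$ and $B$ in $G - F$), and conversely that no spurious short cut is introduced by the gadget. One clean alternative that sidesteps most of this fiddliness is to contract: form $G'$ from $G$ by adding two new vertices $a^\*, b^\*$, subdividing nothing but instead attaching $a^\*$ to both endpoints of each edge in $A$ and $b^\*$ to both endpoints of each edge in $B$ (or, even more simply, observe that edge-disjoint $A$--$B$ paths in $G$ are the same as edge-disjoint $a^\*$--$b^\*$ paths in the graph obtained by identifying, for each $e = xy \in A$, a pendant structure), and then invoke the edge-version of Menger between the two vertices $a^\*$ and $b^\*$, which is itself standard. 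Either route is short; I would write up whichever makes the correspondence with Definition's $(A,B)$-cut most transparent, and then simply cite the classical Menger theorem for the reduced instance.
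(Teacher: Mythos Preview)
Your overall strategy---reduce to a classical form of Menger's theorem via an auxiliary construction---is exactly what the paper does, but the paper's gadget is much simpler than either of the ones you sketch. The paper just subdivides every edge $e \in A \cup B$ once, calling the new vertex $v_e$; sets $V_A = \{v_e : e \in A\}$ and $V_B = \{v_e : e \in B\}$; and applies the standard edge version of Menger between the vertex sets $V_A$ and $V_B$ in the subdivided graph~$G'$. The correspondence between edge-disjoint $A$--$B$ paths in $G$ and edge-disjoint $V_A$--$V_B$ paths in $G'$ is then immediate, and an edge cut $F'$ in $G'$ is pulled back to $G$ by replacing any half-edge incident to some $v_e$ by $e$ itself. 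This sidesteps all of the bookkeeping you anticipate in your line-graph/digraph construction.

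Your second alternative---adding terminals $a^\ast, b^\ast$ adjacent to the endpoints of edges in $A$ and $B$---does not work as stated: an $a^\ast$--$b^\ast$ path in that auxiliary graph need not use any edge of $A$ at all (it may leave $a^\ast$ to an endpoint $x$ of some $e \in A$ and then continue along an edge outside $A$), so edge-disjoint $a^\ast$--$b^\ast$ paths do not correspond to paths \emph{linking} $A$ and $B$ in the paper's sense, and a small $a^\ast$--$b^\ast$ cut need not exist even when a small $(A,B)$-cut does. The subdivision trick fixes exactly this defect, since any path out of $v_e$ must traverse one half of $e$, which corresponds to using $e$ in~$G$.
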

\begin{proof}
Suppose that there is no set of $k$ pairwise edge-disjoint paths
linking $A$ and~$B$ in~$G$.
We create a new graph $G'$ by subdividing every edge $e \in A \cup B$ into a new vertex~$v_e$.
Define $V_A = \{v_e,\ e \in A\}$ and symmetrically for $V_B$.
Observe then that $G'$ has $k$ pairwise edge-disjoint paths linking
vertex sets $V_A$ and $V_B$ if and only if
$G$ has $k$ pairwise edge-disjoint paths linking edge sets $A$ and
$B$. So, by the edge version of Menger's
Theorem, $G'$ has an edge cut $F'$ of size less than $k$ separating $V_A$ from $V_B$.
Let $F$ be obtained from $F'$ be replacing every edge of the form
$v_e$ for $e \in A\cup B$ (if any) with $e$. It follows that $|F|<k$
and $F$ separates $A$ from $B$ in~$G$.
\end{proof}

\section{The proof of \autoref{pluralism}}
\label{sec:thinghood}

In order to prove \autoref{pluralism}, we use the aforementioned notion of fatness
as a potential that we aim to minimize. 
The strategy we follow is to show that if a tree-cut decomposition of
a graph is not
lean, then it can be modified into a tree-cut decomposition of smaller
fatness, without increasing width (\autoref{dialogues}). As there is no infinite decreasing
sequence of fatnesses of tree-cut
decompositions of a given graph, this process will eventually result
in a lean tree-cut decomposition. Starting from a tree-cut
decomposition of minimum width, we will therefore obtain a lean tree-cut
decomposition of the same width, as desired (\autoref{princeton}).

We first focus on the case where the considered graph is
3-edge-connected, which is the crux of the proof. The reduction from the general case is given
at the end of the section.

\begin{lemma}\label{dialogues}
  Let $w \in \N$ and let $(T, \X)$ be a tree-cut decomposition of width $w$ of a 3-edge connected
  graph $G$.
  If $(T, \X)$ is not lean, then $G$ admits a tree-cut decomposition
  of width at most $w$ that has smaller fatness than $(T, \X)$.
\end{lemma}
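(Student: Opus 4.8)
The plan is to start from a minimal non-leanness certificate $(k,a,b,A,B)$ of $G$ (which exists since $(T,\X)$ is not lean) and use it to build a segregation with strictly smaller fatness. By \autoref{testament}, condition~\itemref{distanced} gives an $(A,B)$-cut $F$ with $|F| < k$; let $(V_1,V_2)$ be the bipartition of $V(G)$ induced by a connected-component split of $G - F$, arranged so that (the edges of) $A$ lie on the $V_1$-side and $B$ on the $V_2$-side, so that $E_G(V_1,V_2) \subseteq F$ and hence $|E_G(V_1,V_2)| < k$. I would then let $(U,\mathcal Y)$ be the $(a,b,V_1,V_2)$-segregation of $(T,\X)$; by \autoref{reference} this is a tree-cut decomposition of $G$, and the new link $s_1 s_2$ has adhesion exactly $E_G(V_1,V_2)$, of size $< k$.

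Next I would verify two things about $(U,\mathcal Y)$: that its width is at most $w$, and that its fatness strictly decreased. For the width bound: the bags $Y_t$ are subsets of the old bags $X_t$, so bag sizes only shrink; each old link of $T$ now has one or two copies in $U$, and its adhesion in $(U,\mathcal Y)$ is a subset of (a copy of) its old adhesion in $(T,\X)$, since restricting a bag to $V_1$ or $V_2$ can only remove edges from a cut — here the 3-edge-connectivity is used via \eqref{eq:tcwdeg}, so the only quantities to control are adhesion sizes and the quantities $|X_t| + \deg_T(t)$. The two subdivision nodes $s_1,s_2$ have empty bags and degree $3$, contributing $3 \le w$ (one needs $w \ge 3$, which holds since $G$ is $3$-edge-connected and nonempty). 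The key point is that the newly created link $s_1s_2$ has adhesion of size $< k \le w$, and the subdivided links $b_1,b_1'$ (resp.\ $a_2,a_2'$) have adhesions that are subsets of copies of $\adh(b)$ (resp.\ $\adh(a)$); so width does not increase.

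For the fatness drop, consider the tuple $(\alpha_m,-\beta_m,\dots,\alpha_1,-\beta_1)$ and look at the first coordinate $i$ where it changes. For $i > k$: since no old adhesion of size $\ge i > k$ can gain edges (adhesions only shrink under restriction, and the new link $s_1s_2$ has adhesion $< k < i$), the subgraph $U^{\ge i}$ is isomorphic to a subdivision of a subgraph of $T^{\ge i}$ — actually we must be careful that duplicating $T$ could increase $\alpha_i$; this is where condition~\itemref{commanded} enters: every link of $aTb$ has adhesion $\ge k$, and the segregation is designed so that only one of the two copies of each such link survives in $U^{\ge k}$ (the copies on the ``wrong'' side of $s_1$ or $s_2$ get their adhesion reduced because the relevant part of the graph moved to the other side), so $\alpha_i$ and $\beta_i$ are unchanged for $i > k$, while at level $i = k$ the decomposition is reorganized so that $\beta_k$ strictly increases (the two endpoints $a,b$ of the certificate, previously in one connected component of $T^{\ge k}$ linked through the path $aTb$, get separated by the small-adhesion link $s_1s_2$), or $\alpha_k$ strictly decreases. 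Either way the first changed coordinate moves in the direction that decreases the fatness.

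The main obstacle, and the part that needs the most careful bookkeeping, is the fatness comparison: one must show that passing from $T$ to the segregation $U$ — which roughly doubles the tree — does not increase any $\alpha_i$ or decrease any $\beta_i$ at levels $i \ge k$, and then pin down exactly why level $k$ improves. This relies crucially on the minimality of the certificate (so that at all levels $> k$ the relevant subgraphs are ``rigid'') together with the structural design of the segregation (which copy of a link carries which edges of the adhesion), and on the fact that the path $aTb$ lies in $T^{\ge k}$ by~\itemref{commanded} while the new separator $s_1s_2$ has adhesion strictly below~$k$. I would isolate this as the one delicate lemma and prove the width bound and the basic structural facts about segregations as easy preliminaries.
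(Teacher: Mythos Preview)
Your overall plan matches the paper's, but there is a genuine gap in the adhesion analysis that breaks both your width bound and your fatness argument.

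You claim that for a link $e$ of $T$, the adhesion of its copy $e_i$ in $U$ is ``a subset of (a copy of) its old adhesion in $(T,\X)$, since restricting a bag to $V_1$ or $V_2$ can only remove edges from a cut''. This is false. With $C,D$ the two sides of $e$ in $T$ and $C_A = C\cap V_1$, etc., one has
\[
\adh(e) = E(C_A,D_A)\cup E(C_A,D_B)\cup E(C_B,D_A)\cup E(C_B,D_B),
\qquad
\adh(e_1) = E(C_A,D_A)\cup E(C_A,D_B)\cup E(C_A,C_B),
\]
so $\adh(e_1)$ picks up the edges $E(C_A,C_B)\subseteq F$, which need not lie in $\adh(e)$. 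The inequality $|\adh(e_i)|\le |\adh(e)|$ is true, but it requires an uncrossing argument: if it failed, then $F' = (F\setminus E(C_A,C_B))\cup E(C_B,D_B)$ would be a strictly smaller $(A,B)$-cut, contradicting the minimality of $|F|$. This is exactly how the paper proves it.

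The same missing idea resurfaces, more seriously, in the fatness comparison. You assert that at levels $i\ge k$ ``only one of the two copies of each such link survives in $U^{\ge i}$''. But the doubling problem is not confined to links of $aTb$: any link $e$ of $T$ with $|\adh(e)|\ge i$ (possibly far from $aTb$) has two copies, and nothing you have said prevents \emph{both} from having adhesion $\ge i$. The paper handles this by choosing $F$ not merely of minimum size but, among minimum cuts, minimizing a distance potential $d_{a,b}(F)=\sum_{f\in F}d_{a,b}(f)$. With this secondary minimization one can show that if $|\adh(e_i)|=|\adh(e)|$ then $\adh(e_{3-i})\subseteq F$ (so $|\adh(e_{3-i})|<k$); for $e$ off $aTb$ this uses $d_{a,b}$-minimality, and for $e$ strictly inside $aTb$ it uses the minimality of the distance between $a$ and $b$ in the certificate. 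Without this dichotomy you cannot control $\alpha_i$ for $i\ge k$, and the fatness could go up. Finally, the improvement need not occur exactly at level $k$: the paper shows it occurs at some $l\ge k$ (the largest adhesion of a ``$k$-excessive'' link, if one exists; otherwise $l=\min(|\adh(a)|,|\adh(b)|)$).
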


\begin{proof}
  Every edge $e$ of $G$ defines two (possibly equal) nodes of $T$,
  those indexing the bags that contain its endpoints. The \emph{$T$-path}
  of $e$ is defined as the path of $T$ linking these vertices.
  For every $e \in E(G)$, we define $d_{a,b}(e)$ as 0 if the $T$-path
  of $e$ shares a link with $aTb$ and $1+d$ otherwise, where $d$ denotes the minimal
  distance between a node of $aTb$ and a node of the $T$-path of~$e$.
  For every set $F \subseteq E(G)$ we set $d_{a,b}(F)= \sum_{e \in F} d_{a,b}(e)$.

  Let us fix a {minimal non-leanness certificate}
  $(k,a,b,A,B)$ of $(T,\X)$. We set $m = |E(G)|$.
  We associate to  $(k,a,b,A,B)$ an $(A,B)$-cut $F$ as follows.
  By \autoref{testament} (the variant of Menger's theorem for edge sets), there is in $G$ a cut of size strictly smaller than $k$ that separates $A$ from $B$.
  Let $F$ be such {an $(A,B)$-cut} of minimal size that, additionally,
  minimizes~$d_{a,b}(F)$.  Notice that none of $A \subseteq F$ and
  $B\subseteq F$ is possible, since $|A|=|B|=k$ and~$|F|<k$. By the
  minimality of the  size of $F$, the graph $G - F$ has exactly
  two connected components.
  We call them $G_{A}$ and $G_{B}$, with the convention that 
  \begin{eqnarray}
    A \subseteq E(G_{A}) \cup F &  \mbox{ and } & B \subseteq E(G_{B})\cup F.\label{corporeal}
  \end{eqnarray}

We denote by $(U, \mathcal{Y})$ the  $(a,b,V(G_A), V(G_B))$-segregation  of $G$. Recall that the tree $U$ is obtained from two copies $U_1$ and $U_2$ of
$T$. For every node $t$ of $T$ and $i\in [2]$, we denote by $t_i$ the
copy of $t$ in $U_i$. Similarly, for every link $e$ of $T$ and $i\in
[2]$, we denote by $e_i$ the copy of $e$ in $U_i$ (except for $a$ and $b$ where the
corresponding subdivided links
have already been named in the definition of a segregation).
Notice that we can unambiguously use $\adh$ without specifying the
tree-cut decomposition it refers to as $E(U)\cap E(T)=\emptyset$.
For every link $e\in E(U) \setminus \{a_2',b_1'\}$, we denote by $\hat{e}$ the corresponding link of $T$, that is, the only link such that $e \in \{\hat{e}_1,\hat{e}_2\}$. For the special cases $e = a_2'$ and $e = b_1'$ we respectively set $\hat{e} = a$ and $\hat{e} = b$.\label{fracturon}

In what follows we prove that 
 $\width(U, \mathcal{Y})\leq  \width(T,\X)$ (\autoref{slem:widthU}) and that the fatness of $(U,
 \mathcal{Y})$ is (strictly) smaller than that of $(T,\X)$ (\autoref{slem:fatfat}).
 The proof is split in a series of sublemmas. 
The end of the proof of each sublemma is marked with a ``$\blacksquare$''. When a sublemma contains a claim, we use the symbol ``{${\diamond}$}'' to mark the end of its proof.
We start with a series of sublemmas related to properties of adhesions.

\begin{sublemma}\label{recurrent}
  For every $e\in E(T)$ and $i \in [2]$,
  \begin{enumerate}[(i)]
  \item $|\adh(e_i)| \leq |\adh(e)|$;\label{doctrines}
  \item $|\adh(a_2')| \leq |\adh(a)|$ and $|\adh(b_1')| \leq |\adh(b)|$;\label{listeners}
  \item if $|\adh(e_i)| = |\adh(e)|$ then $\adh(e_{3-i})
  \subseteq F$;\label{onlooking}
  \item if $|\adh(a_1)| = |\adh(a)|$ then $\adh(a'_{2})
  \subseteq F$ and if $|\adh(b_2)| = |\adh(b)|$ then $\adh(b'_{1})
  \subseteq F$.\label{particuli}
  \end{enumerate}
\end{sublemma}

\begin{proof}[Proof  of \autoref{recurrent}.]
  We assume that $i=1$. The proof for the case $i=2$ is symmetric.
  Before proving the desired inequalities and inclusions, we give some definitions and prove a claim.  
  Let $e\in E(T)$ and let $T_1$ and $T_2$ be the two connected
  components of $T - \{e\}$, named as follows:
  \begin{itemize}
  \item if $e = a = b$, then $T_1$, corresponds (via the isomorphism from $T$ to $U_1$) to the connected component of $U_1 - \{s_1\}$ that is incident with $b_1$, and then $T_2$ to that that is incident with $b_1'$ (recall that in this case, the edge $a=b$ is replaced during the construction of $U_1$ by the two edges $a_1 = b_1$ and $b_1'$, both incident to $s_1$);
  \item otherwise, if $e \notin E(aTb)$ or $e \in \{a,b\}$, we define $T_1$ and $T_2$ so that $aTb$ is disjoint from~$T_1$;
  \item in the remaining case, $e \in E(aTb) \setminus \{a,b\}$ and we choose them so that $a \in V(T_1)$ (and then $b \in V(T_2)$).
  \end{itemize}

  We define $C = \bigcup_{t\in V(T_1)} X_t$ and $D = \bigcup_{t\in V(T_2)} X_t$.

  Also, we~set:
\begin{align}
C_A  = C \cap V(G_A), &\qquad D_A = D \cap V(G_A),  \label{excessive} \\
 C_B  = C \cap V(G_B),&\qquad D_B = D \cap V(G_B). \label{dismissed}
\end{align}
See \autoref{rationale} for an example. 
By the choice of $T_{1}$ and $T_{2}$, every edge in $B$ has an endpoint in $D$.
This, together with  the second statement of~\eqref{corporeal} ensures that:
\begin{eqnarray}
\mbox{Every edge in $B$ either has an endpoint in $D_{B}$ or 
is an edge of $E(D_{A},C_{B})$.}\label{residents}
\end{eqnarray}

Recall that $F$ consists of all edges with  one endpoint in $C_A \cup D_A$ and the other in $C_B\cup D_B$. This means that:
\begin{eqnarray}
F & =& E(C_{A},C_{B})\cup E(C_{A},D_{B})\cup E(D_{A},C_{B})\cup E(D_{A},D_{B}).\label{aesthetes}
\end{eqnarray}
Also, the edges of $\adh(e)$ are those with  one endpoint in $C$ and the other in~$D$.
This implies that:
\begin{eqnarray}
\adh(e) & = & E(C_{A},D_{A})\cup E(C_{A},D_{B})\cup
              E(C_{B},D_{A})\cup E(C_{B},D_{B})\label{inflating}\\
  \text{and}\ \adh(e_1) & = & E(C_{A},D_{A})\cup E(C_{A},C_{B})\cup E(C_{A},D_{B}).\label{sublation}
\end{eqnarray}
We now set:
\begin{eqnarray}
F'& =& (F \setminus E(C_A,C_B)) \cup E(C_B,D_B).\label{generates}
\end{eqnarray}

\begin{claim}\label{cl:abcut}
$F'$ is an $(A,B)$-cut and also an $(E(C_A,D_A), B)$-cut.
\end{claim}
\noindent (The second statement will be useful when proving~\itemref{onlooking}.)

\begin{proof}
Looking for a contradiction, let us assume that there is  a path $P$
linking an edge of $A \cup E(C_A, D_A)$ and an edge of~$B$ in $G - F'$. 
We denote by $e_{A}$ and $e_{B}$ the 
edges of $P$ that are incident to its endpoints, with $e_{A}\in A \cup E(C_A, D_A)$ and $e_{B}\in B$. From~\eqref{residents}, either $e_{B}$ has an endpoint in $D_{B}$ or 
$e_{B}\in E(D_{A},C_{B})$. We first exclude the case where $e_{B}\in E(D_{A},C_{B})$.
Indeed, if this is the case then, using~\eqref{aesthetes}, we obtain that $e_B\in F\setminus E(C_{A},C_{B})\subseteq F'$ a contradiction. We conclude that $e_{B}$ has an endpoint, say $y$, where $y\in D_{B}$.

As $P$ links an edge of $F\cup E(G_{A})$ to an edge of $F\cup E(G_{B})$, $P$ contains at least one edge of $F$.  Since $P$ does not contain edges of $F'$, we deduce that this edge belongs to $E(C_A,C_B)$.
Among all edges of $P$ that belong to $E(C_{A},C_{B})$
let $g$ be the one that is closer to $e_{B}$ in $P$ and let $x_{g}$ be the endpoint of $g$
that belongs to $C_{B}$. By the choice of $g$, we know that the subpath $P'$ of $P$
that is between $x_{g}$ and $y$ is a subgraph of $G_{B}$.
As $x_g\in C_{B}$ and $y\in D_{B}$, we have that $P'$ (and therefore $P$ as well) 
contains an edge $f \in E(C_B,D_B)$. However $E(C_B,D_B) \subseteq F'$, a contradiction. Therefore $F'$ is indeed an $(A,B)$-cut and a $(E(C_A,D_A), B)$-cut. The claim follows.
\renewcommand{\qedsymbol}{$\diamond$}
\end{proof}
\renewcommand{\qedsymbol}{$\square$}

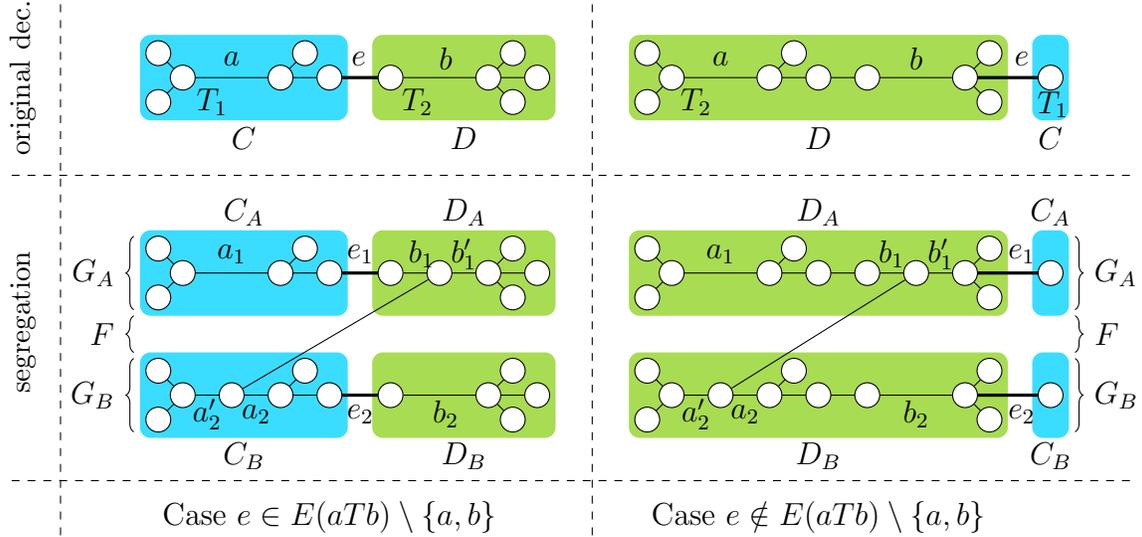
\begin{figure}[H]
  \centering
  \begin{tikzpicture}[scale = 0.58, every node/.style = smallbag]
    \begin{scope}[scale = 0.5] 
  \begin{scope}
    \fill[rounded corners, color = colorD, xshift = 0.5cm] (14.75, 1.75) -- (14.75, -1.75) -- (7.25, -1.75) -- (7.25, 1.75) -- cycle;
    \fill[rounded corners, color = colorC] (-1.75, 1.75) -- (-1.75, -1.75) -- (6.75, -1.75) -- (6.75, 1.75) -- cycle;
    \draw (1.2, -1) node[normal] {$T_{1}$};   
     \draw (2.5, -2.5) node[normal] {$C$};
         \draw (9.6, -1) node[normal] {$T_{2}$};   
    \draw (11.5, -2.5) node[normal] {$D$};
    \draw (0,0) node (b0) {}
    -- ++(4,0) node (b2) {}
    node[normal,midway, yshift = 0.25cm] {$a$}
    -- ++(2,0) node (v0) {};
    \draw[very thick]  (v0) -- ++(2.5,0) node (v1) {} 
    node[normal,midway, yshift = 0.25cm, xshift = -0.1cm] {$e$};
    \draw (v1)-- ++(4,0) node (b1) {}
    node[normal,midway, yshift = 0.25cm] {$b$}
    (b1) -- ++(1,1) node {}
    (b1) -- ++(1,-1) node {}
    (b1) -- ++(2,0) node {};
    \draw
    (b0) -- ++(-1,1) node {}
    (b0) -- ++(-1,-1) node {}
    (b2) -- ++(1,1) node {};
  \end{scope}
  
  \begin{scope}[yshift = -8cm]
    \begin{scope}[every node/.style = smallbag]
      \fill[rounded corners, color = colorD, xshift = 0.5cm] (14.75, 1.75) -- (14.75, -1.75) -- (7.25, -1.75) -- (7.25, 1.75) -- cycle;
      \fill[rounded corners, color = colorC] (-1.75, 1.75) -- (-1.75, -1.75) -- (6.75, -1.75) -- (6.75, 1.75) -- cycle;
      \draw (2.5, 2.5) node[normal] {$C_A$};
      \draw (11.5, 2.5) node[normal] {$D_A$};
      \draw
      (0,0) node (b0) {}
      -- ++(4,0) node (b2) {}
      node[normal,midway, yshift = 0.25cm] {$a_1$}
      -- ++(2,0) node (v0) {};
      \draw[very thick]  (v0) -- ++(2.5,0) node (v1) {} 
      node[normal,midway, yshift = 0.25cm, xshift = -0.08cm] {$e_1$};
      \draw (v1) -- ++(2,0) node (uf) {}
      node[midway,normal, yshift = 0.25cm] {$b_1$}
      -- ++(2,0) node (b1) {}
      node[midway,normal, yshift = 0.25cm] {$b_1'$}
      (b1) -- ++(1,1) node {}
      (b1) -- ++(1,-1) node {}
      (b1) -- ++(2,0) node {};
      \draw
      (b0) -- ++(-1,1) node {}
      (b0) -- ++(-1,-1) node {}
      (b2) -- ++(1,1) node {};
%
    \begin{scope}[yshift = -5cm]
    \fill[rounded corners, color = colorD, xshift = 0.5cm] (14.75, 1.75) -- (14.75, -1.75) -- (7.25, -1.75) -- (7.25, 1.75) -- cycle;
    \fill[rounded corners, color = colorC] (-1.75, 1.75) -- (-1.75, -1.75) -- (6.75, -1.75) -- (6.75, 1.75) -- cycle;
    \draw (2.5, -2.5) node[normal] {$C_B$};
    \draw (11.5, -2.5) node[normal] {$D_B$};
    \draw
    (0,0) node (b0) {}
    -- ++(2,0) node (ue) {}
    node[midway,normal, yshift = -0.25cm] {$a_2'$}
    -- ++(2,0) node (b2) {}
    node[midway,normal, yshift = -0.25cm] {$a_2$}
    -- ++(2,0) node (v0) {};
    \draw[very thick]  (v0) -- ++(2.5,0) node (v1) {} 
    node[normal,midway, yshift = -0.25cm, xshift = -0.08cm] {$e_2$};
    \draw (v1) -- ++(4,0) node (b1) {}
    node[normal,midway, yshift = -0.25cm] {$b_2$}
    (b1) -- ++(1,1) node {}
    (b1) -- ++(1,-1) node {}
    (b1) -- ++(2,0) node {}
    (b0) -- ++(-1,1) node {}
    (b0) -- ++(-1,-1) node {}
    (b2) -- ++(1,1) node {};
    \draw (uf) -- (ue);
  \end{scope}    
\end{scope}

  \end{scope}      
    \end{scope}
\begin{scope}[scale = 0.5, xshift = 20cm] 
  \begin{scope}
    \fill[rounded corners, color = colorC, xshift = 0.5cm] (15.75, 1.75) -- (15.75, -1.75) -- (14.25, -1.75) -- (14.25, 1.75) -- cycle;
    \fill[rounded corners, color = colorD] (-1.75, 1.75) -- (-1.75, -1.75) -- (13.75, -1.75) -- (13.75,1.75) -- cycle;
    \draw (6, -2.5) node[normal] {$D$};
    \draw (1, -1) node[normal] {$T_{2}$};
    \draw (15.5, -2.5) node[normal] {$C$};
    \draw (15.6, -1.1) node[normal] {$T_{1}$};
    \draw (0,0) node (b0) {}
    -- ++(4,0) node (b2) {}
    node[normal,midway, yshift = 0.25cm] {$a$}
    -- ++(2,0) node {}
    -- ++(2,0) node {}
    -- ++(4,0) node (b1) {}
    node[normal,midway, yshift = 0.25cm] {$b$}
    (b1) -- ++(1,1) node {}
    (b1) -- ++(1,-1) node {};
    \draw[very thick] (b1) -- ++(3.5,0) node {} 
    node[normal,midway, yshift = 0.25cm, xshift = 0.1cm] {$e$};
    \draw (b0) -- ++(-1,1) node {}
    (b0) -- ++(-1,-1) node {}
    (b2) -- ++(1,1) node {};
  \end{scope}

  \begin{scope}[yshift = -8cm]
    \begin{scope}[every node/.style = smallbag]
      \fill[rounded corners, color = colorC, xshift = 0.5cm] (15.75, 1.75) -- (15.75, -1.75) -- (14.25, -1.75) -- (14.25, 1.75) -- cycle;
      \fill[rounded corners, color = colorD] (-1.75, 1.75) -- (-1.75, -1.75) -- (13.75, -1.75) -- (13.75, 1.75) -- cycle;
      \draw (6, 2.5) node[normal] {$D_A$};
      \draw (15.5, 2.5) node[normal] {$C_A$};
      \draw
    (0,0) node (b0) {}
    -- ++(4,0) node (b2) {}
    node[normal,midway, yshift = 0.25cm] {$a_1$}
    -- ++(2,0) node {}
    -- ++(2,0) node {}
    -- ++(2,0) node (uf) {}
    node[midway,normal, yshift = 0.25cm] {$b_1$}
    -- ++(2,0) node (b1) {}
    node[midway,normal, yshift = 0.25cm] {$b_1'$}
    (b1) -- ++(1,1) node {}
    (b1) -- ++(1,-1) node {};
    \draw[very thick] (b1) -- ++(3.5,0) node {} 
    node[normal,midway, yshift = 0.25cm, xshift = 0.1cm] {$e_1$};
    \draw (b0) -- ++(-1,1) node {}
    (b0) -- ++(-1,-1) node {}
    (b2) -- ++(1,1) node {};
    \begin{scope}[yshift = -5cm]
      \draw [decorate,decoration={brace,amplitude=3pt}] (16.5, 3.25) -- (16.5, 1.75) node[normal, midway, anchor = west, xshift = 0.25cm] {$F$};
      \draw [decorate,decoration={brace,amplitude=3pt}] (16.5, 6.5) -- (16.5, 3.5) node[normal, midway, anchor = west, xshift = 0.25cm] {$G_A$};
      \draw [decorate,decoration={brace,amplitude=3pt}] (16.5, 1.5) -- (16.5, -1.5) node[normal, midway, anchor = west, xshift = 0.25cm] {$G_B$};
      \draw [decorate,decoration={brace,amplitude=3pt}, xshift = 2cm] (-24, 1.75) -- (-24, 3.25) node[normal, midway, anchor = east, xshift = -0.25cm] {$F$};
      \draw [decorate,decoration={brace,amplitude=3pt}, xshift = 2cm] (-24, 3.5) -- (-24, 6.5) node[normal, midway, anchor = east, xshift = -0.25cm] {$G_A$};
      \draw [decorate,decoration={brace,amplitude=3pt}, xshift = 2cm] (-24, -1.5) -- (-24, 1.5) node[normal, midway, anchor = east, xshift = -0.25cm] {$G_B$};
    \fill[rounded corners, color = colorC, xshift = 0.5cm] (15.75, 1.75) -- (15.75, -1.75) -- (14.25, -1.75) -- (14.25, 1.75) -- cycle;
      \fill[rounded corners, color = colorD] (-1.75, 1.75) -- (-1.75, -1.75) -- (13.75, -1.75) -- (13.75,1.75) -- cycle;
      \draw (6, -2.5) node[normal] {$D_B$};
      \draw (15.5, -2.5) node[normal] {$C_B$};
    \draw
    (0,0) node (b0) {}
    -- ++(2,0) node (ue) {}
    node[midway,normal, yshift = -0.25cm] {$a_2'$}
    -- ++(2,0) node (b2) {}
    node[midway,normal, yshift = -0.25cm] {$a_2$}
    -- ++(2,0) node {}
    -- ++(2,0) node {}
    -- ++(4,0) node (b1) {}
    node[normal,midway, yshift = -0.25cm] {$b_2$}
    (b1) -- ++(1,1) node {}
    (b1) -- ++(1,-1) node {};
    \draw[very thick] (b1) -- ++(3.5, 0) node {} 
    node[normal,midway, yshift = -0.25cm, xshift = 0.1cm] {$e_2$};
    \draw (b0) -- ++(-1,1) node {}
    (b0) -- ++(-1,-1) node {}
    (b2) -- ++(1,1) node {};
    \draw (uf) -- (ue);
    \end{scope}
    \end{scope}
  \end{scope}
\end{scope}
\draw[dashed] (8.375, 1.5) -- ++(0,-11);
\draw[dashed] (-2.5, 1.5) -- ++(0,-11);
\draw[dashed] (-3.5, -2) -- ++(23,0);
\draw[dashed] (-3.5, -8.25) -- ++(23,0);
\draw (3, -9) node[normal] {Case $e \in E(aTb)\setminus
  \{a,b\}$};
\draw (13, -9) node[normal] {Case $e \notin E(aTb)\setminus
  \{a,b\}$};
\draw (-3, 0) node[normal, anchor = south, rotate = 90, text width = 4cm, align = center] {{\small original dec.}};
\draw (-3, -5) node[normal, anchor = south, rotate = 90] {{\small segregation}};
\end{tikzpicture}
 \caption{The sets $C$, $D$, $C_A$, $D_A$, $C_B$, and $D_B$ in the
    original decomposition and in the segregation considered in the proof of \autoref{recurrent}, depending on the position of $e$. Note that $e$ could also be an edge of $E(T) \setminus E(aTb)$ such that its closest vertex on $aTb$ is an internal vertex.}
  \label{rationale}
\end{figure}

Now we prove \itemref{doctrines}--\itemref{particuli}.
\medskip

\noindent \textit{Proof of \itemref{doctrines}.} 
Assume for contradiction that $|\adh(e_1)| > |\adh(e)|$. 
Comparing~\eqref{inflating} with~\eqref{sublation}, we get $|E(C_A, C_B)| > |E(C_B, D_A)| + |E(C_B, D_B)|$ which implies:
\begin{eqnarray}
|E(C_A, C_B)| & >  & |E(C_B, D_B)|.
\label{magnitude}
\end{eqnarray}
In particular $E(C_A, C_B)$ is non-empty. From~\eqref{aesthetes}, $E(C_A, C_B)
\subseteq F$.
Using~\eqref{magnitude} and the definition of $F'$ in~\eqref{generates}, 
 we deduce that $|F'| < |F|$, a contradiction to the minimality of~$|F|$.
This proves~\itemref{doctrines}.
\medskip

\noindent \textit{Proof of \itemref{listeners}.}
The proof is identical to the proof of \itemref{doctrines}, using $e=a$, $i=2$, $e_2=a_2'$ (resp.\ $e=b$, $i=1$, $e_{1}=b_1'$)  to get the first (resp.\ second) inequality.
\medskip

\noindent \textit{Proof of \itemref{onlooking}.}
Let us assume that $|\adh(e_1)| = |\adh(e)|$ (the case where
$|\adh(e_2)| = |\adh(e)|$ is symmetric).
As we assume $|\adh(e_1)| = |\adh(e)|$, using~\eqref{inflating} and~\eqref{sublation}, we get
\begin{align}
  |E(C_A,C_B)| &\geq |E(C_B,D_B)|.\label{unbinding} 
\end{align}
Considering $F'$ as defined in~\eqref{generates}, we deduce from~\eqref{unbinding} that~$|F'| \leq |F|$, which we will use later.
Towards a contradiction with \itemref{onlooking}, let us assume that $\adh(e_{2})
  \nsubseteq F$ or, equivalently, that $|\adh(e_{2}) \setminus F|>0$. 
We consider two different cases.
\smallskip

\noindent \textit{First case:} $e \not \in E(aTb)$ or $e \in
\{a,b\}$. We notice the following equality:

\begin{eqnarray}
\adh(e_{2}) & = & E(C_A,C_B) \cup E(C_B,D_A) \cup E(C_B, D_B).\label{falseness}
\end{eqnarray}

Eq.\ \eqref{falseness} together with~\eqref{aesthetes}, implies that
$\adh(e_{2}) \setminus F = E(C_B,D_B)$ and we deduce
$|E(C_B,D_B)|>0$.
With \eqref{unbinding} this implies that $E(C_A,C_B)$ is non-empty. 
Also, from \autoref{cl:abcut} we have that $F'$ is an $(A,B)$-cut and
we proved above that it is not larger than~$F$.  Notice that the
$T$-path of any edge of $E(C_B,D_B)$ contains $e$.
On the other hand,
\begin{itemize}
\item when $e \in \{a,b\}$, no $T$-path of an edge of $E(C_A, C_B)$
  does contain~$e$, and
\item when $e \notin E(aTb)$, no $T$-path of an edge of
$E(C_A, C_B)$ does contain the endpoint $t_e$ of $e$ that is the closest to
a node of~$aTb$.
\end{itemize}
Therefore, for every $f \in E(C_A, C_B)$ and $f' \in E(C_B,D_B)$,
\begin{itemize}
\item either $e \in \{a,b\}$, then $d_{a,b}(f') = 0$ (because the $T$-path of
  $f'$ contains an edge of $aTb$, which is $e$) and $d_{a,b}(f) > 0$
  (for the opposite reason);
\item or $e \notin E(aTb)$, then $d_{a,b}(f') \leq \dist_T(V(aTb),
  t_e)$ (because the $T$-path of $f'$ contains $t_e$) and $d_{a,b}(f) \geq \dist_T(V(aTb),
  t_e) +1$ (for the opposite reason, and by definition of $t_e$).
\end{itemize}
In both cases we have $d_{a,b}(f') < d_{a,b}(f)$.
The fact that  $E(C_A,C_B)$ is non-empty, together
with~\eqref{unbinding}, imply that  $d_{a,b}(E(C_A,C_B)) >
d_{a,b}(E(C_B,D_B)) \geq 0$, hence $d_{a,b}(F') < d_{a,b}(F)$. This
contradicts the choice of $F$, thus this case is not possible.
\smallskip

\noindent \textit{Second case:} $e \in E(aTb) \setminus \{a,b\}$. 
Recall (\autoref{cl:abcut}) that $F'$ is a $(E(C_A,D_A),
B)$-cut. Notice that because of~\eqref{aesthetes}
and~\eqref{generates}, it follows that $\adh(e) \setminus F' =
E(C_A,D_A)$. We deduce that $F'$, in fact, a $(\adh(e), B)$-cut.

We choose a subset $F''$ of $\adh(e)$ such that $|F'' |=k$.
(Because $e \in aTb$ and of the definition of $a,b$,  $|\adh(e)| \geq
k$ and such a subset always exists.)
We claim that the quintuple $(k, e, b, F'', B)$ satisfies conditions
\itemref{distanced} and \itemref{commanded} of~\autoref{forbidden}
(non-leanness certificate).
Since $eTb$ is a subpath of $aTb$, we have $|\adh(e')| \geq k$ for
every $e' \in E(eTb)$ and thus Condition \itemref{commanded}
holds. For Condition  \itemref{distanced} observe that $F'$ separates
$\adh(e)$ from $B$ and $|F'| \leq |F| <k$, therefore there are no $k$
edge-disjoint paths linking $\adh(e)$ to $B$.

Besides Conditions \itemref{distanced} and \itemref{commanded}, $e
\in E(aTb) \setminus \{a,b\}$, and therefore $eTb$ is shorter than $aTb$. This
contradicts the minimality of the distance between $a$ and $b$ that we
assumed. Therefore, this case is not possible either and we have in
both cases that $\adh(e_2)\subseteq F$.
\medskip

\noindent \textit{Proof of \itemref{particuli}.} The proof follows
the very same steps as the proof of \itemref{onlooking} (first case)
using $e=a$, $i=1$, $e_2=a_2'$ (resp.\ $e=b$, $i=2$, $e_1=b_1'$).
\renewcommand{\qedsymbol}{$\blacksquare$}
\end{proof}
\renewcommand{\qedsymbol}{$\square$}

We can now complete the first goal of this proof.
\begin{sublemma}\label{slem:widthU}
  $\width(U, \mathcal{Y})\leq  \width(T,\X)$
\end{sublemma}
\begin{proof}
From~\autoref*{recurrent}.\itemref{doctrines} and \autoref*{recurrent}.\itemref{listeners}
we obtain that 
\[
  \max_{g\in E(U)} |\adh(g)| \leq \max_{g\in E(T)} |\adh(g)|.
\]

By definition of a segregation, for every $i \in [2]$ and $t \in
V(T)$, we have $Y_{t_i} \subseteq X_t$ and $\deg_U(t_i) = \deg_T(t)$,
hence $|X_t| + \deg_T(t) \geq |Y_{t_i}| + \deg_U(t_i)$.
For $i\in [2]$ we also have $Y_{s_i} = \emptyset$, and $\deg_U(s_i) = 3$.
As $G$ is 3-edge-connected and $\adh(a)$ is not empty,
$|\adh(a)| \geq 3$ so in particular, $|\adh(a)| \geq |Y_{s_i}| +
\deg_U(s_i)$.
Using the simplified definition of width for tree-cut decompositions
of 3-edge-connected graphs \eqref{eq:tcwdeg}, we conclude that
$\width(U, \mathcal{Y}) \leq \width(T, \X)$.
\renewcommand{\qedsymbol}{$\blacksquare$}
\end{proof}
\renewcommand{\qedsymbol}{$\square$}

In the rest of the proof we focus on the second goal, i.e.\ showing
that the fatness of $(U, \mathcal{Y})$ is smaller than that of $(T,
\X)$. Let
\[(\alpha_m, -\beta_m, \alpha_{m-1}, -\beta_{m-1}, \dots, \alpha_1,
-\beta_1)\] be the fatness of $(T, \X)$ and let \[(\alpha'_m,
-\beta'_m, \alpha'_{m-1}, -\beta'_{m-1}, \dots, \alpha'_1, -\beta'_1)\]
be that of $(U, \mathcal{Y})$, as defined in \autoref{def:fatness}
(recall that $m=|E(G)|$).

\begin{sublemma}
\label{withstood}
  For every $e \in E(T)$,
  \begin{itemize}
  \item either $|\adh(e)| > |\adh(e_1)|$ and $|\adh(e)| >
    |\adh(e_2)|$;
  \item or there is some     $i \in [2]$ such that $|\adh(e)| = |\adh(e_i)|$ and $\adh(e_{3-i}) \subseteq F$.
  \end{itemize}
\end{sublemma}

\begin{proof}[Proof  of \autoref{withstood}] This sublemma is a direct corollary of~\autoref*{recurrent}.\itemref{doctrines}
and~\autoref*{recurrent}.\itemref{onlooking}.\renewcommand{\qedsymbol}{$\blacksquare$}
\end{proof}
\renewcommand{\qedsymbol}{$\square$}

\begin{sublemma}\label{vegetable}
 $A\subseteq  \adh(a_1)\cup F$ and  $B\subseteq  \adh(b_2)\cup F$.
  \end{sublemma} 

\begin{proof}[Proof  of \autoref{vegetable}.]
We only prove the first statement as the proof of the second one is symmetric.
We define $C_{A}$, $C_{B}$, $D_{A}$, and $D_{B}$ as in~\eqref{excessive} and~\eqref{dismissed}
in the proof of~\autoref{recurrent} for the case where $e=a$.
Under this setting,~\eqref{aesthetes},~\eqref{inflating}, and~\eqref{sublation} are still valid; we restate them below for clarity.
\begin{eqnarray}
F & =& E(C_{A},C_{B})\cup E(C_{A},D_{B})\cup E(D_{A},C_{B})\cup E(D_{A},D_{B}),\label{mightiest}\\
\adh(a_1) & = & E(C_{A},D_{A})\cup E(C_{A},C_{B})\cup E(C_{A},D_{B}),\ \text{and}\label{affluence}\\
\adh(a) & = & E(C_{A},D_{A})\cup E(C_{A},D_{B})\cup  E(C_{B},D_{A})\cup E(C_{B},D_{B}).\label{fruitless}
\end{eqnarray}
Recall that $A \subseteq E(G_{A}) \cup F $
(see~\eqref{corporeal}). Also, from~\eqref{dismissed}, we obtain that
$E(C_{B},D_{B})\subseteq E(G_{B})$. These two relations imply that
$A\cap E(C_{B},D_{B})=\emptyset$. Combining this last relation
with~\eqref{fruitless}, we get:
\begin{eqnarray}
A & \subseteq & E(C_{A},D_{A})\cup E(C_{A},D_{B})\cup  E(C_{B},D_{A}).\label{graveyard}
\end{eqnarray}
As each of the terms of the right side of~\eqref{graveyard}
appears on the right side of either~\eqref{mightiest} or~\eqref{affluence}, we conclude that $A\subseteq F\cup \adh(a_1)$ as required.
\renewcommand{\qedsymbol}{$\blacksquare$}
\end{proof}
\renewcommand{\qedsymbol}{$\square$}

Given an integer $p$, we say that a link $e\in E(T)$ is {\em $p$-excessive} if 
\begin{eqnarray}
|\adh(e)|\geq p, & |\adh(e)|> |\adh(e_1)|, & 
\mbox{and } |\adh(e)|>|\adh(e_2)|.\label{ingenuity}
\end{eqnarray}

\begin{sublemma}\label{committal}
  Let $l$ be an integer such that $l\geq k$  and none of the  links  of 
  $T$ of adhesion more than $l$ is $k$-excessive. 
  Then 
  \begin{enumerate}[(i)]
  \item \label{item:alphaineq} $\alpha'_l \leq \alpha_l$ and $\beta'_l
    \geq \beta_l$, and
  \item \label{item:alphaeq} for every $j \in [l+1, m]$, $\alpha'_j = \alpha_j$  and $\beta'_j \geq \beta_j$.
  \end{enumerate}
\end{sublemma}

\begin{proof}[Proof  of \autoref{committal}.]  Let $j \in [l, m]$.
  Recall that we denote by $U^{\geq j}$ the subgraph of $U$  induced by links that have an adhesion of size at least~$j$.
  We need first the following claim:
  
  \begin{claim}\label{cl:fiuj}
    If  $f$ is a link of $T$ such that   $f_i$ belongs to $U^{\geq j}$ for some  $i\in[2]$, then $|\adh(f)|=|\adh(f_i)|$ and  $\adh(f_{3-i})\subseteq F$.
  \end{claim}
  
  \begin{proof}
    We first prove that it is not possible that $|\adh(f)| > |\adh(f_1)|$ and
    $|\adh(f)|>|\adh(f_2)|$. Towards a contradiction, let us suppose that it
    holds. If $|\adh(f)|> l$, then $f$ is a $k$-excessive
    link of adhesion greater than $l$, a contradiction to the hypothesis of the lemma.
    If $|\adh(f)|\leq l$, then for every
    $i\in[2]$ we have $|\adh(f_{i})|< l\leq j$, hence $f_i\not\in E(U^{\geq j})$, a contradiction.
    By \autoref{withstood}, there is some $i' \in [2]$ such that
    $|\adh(f)| = |\adh(f_{i'})|$ and $\adh(f_{3-i'}) \subseteq F$. As
    $|\adh(f_{3-i'})|\leq |F|< k\leq j\leq |\adh(f_i)|$, we have that
    $i=i'$, and therefore $|\adh(f)| = |\adh(f_i)|$ and
    $\adh(f_{3-i})\subseteq F$, as desired.
    \renewcommand{\qedsymbol}{$\diamond$}
  \end{proof}
  \renewcommand{\qedsymbol}{$\square$}
  
  Let $f \in E(T)$. From the above claim we have the following:
  \begin{eqnarray}
    \mbox{If  $f_i\in E(U^{\geq j})$ for some  $i\in [2]$, then $f\in E(T^{\geq j})$ and  $f_{3-i}\not\in E(U^{\geq j})$.}
    \label{unhappily}
  \end{eqnarray}

  We  next claim that if $f\in\{a,b\}$ and $f_i$ belongs to $U^{\geq
    j}$ for some $i\in [2]$, then $i=1$ in case $f=a$ and $i=2$ in
  case $f=b$. We present the proof of this claim for the case where
  $f=a$ (the case $f=b$ is symmetric). Assume to the contrary that
  $i=2$.  Then, from the above claim, $\adh(a_{1})\subseteq F$. Recall
  that $A \subseteq \adh(a_1)\cup F$, according
  to~\autoref{vegetable}. We conclude that $A\subseteq  F$, a
  contradiction as $|A|=k$ and $|F|<k$. Thus, the claim
  holds.\smallskip

  By \autoref{cl:fiuj}, if $a_1\in E(U^{\geq j})$ then $|\adh(a_1)| =
  |\adh(a)|$ and  $|\adh(a_{2})|\leq  |F|<k\leq j$. Therefore $a\in
  E(T^{\geq j})$ and  $a_{2}\not\in E(U^{\geq j})$. Moreover, the fact
  that $|\adh(a_1)| = |\adh(a)|$ together with the  first statement of
  \autoref*{recurrent}.\itemref{particuli} implies that $\adh(a'_{2})
  \subseteq F$. This implies that $|\adh(a_2')| \leq |F| < k\leq j$,
  therefore $a_{2}'\not\in E(U^{\geq j})$. We resume these
  observations, along with the symmetric observations for the case
  where $b_2\in E(U^{\geq j})$, to the following statements:
  \begin{align}
    & \text{If}\ a_1\in E(U^{\geq j}),\ \text{then}\ a\in E(T^{\geq j}),\ a_{2}\not\in E(U^{\geq j}),\ \text{and}\ a_{2}'\not\in E(U^{\geq j}).
      \label{character}
    \\
    &\text{If}\ b_2\in E(U^{\geq j}),\ \text{then}\ b\in E(T^{\geq j}),\ b_{1}\not\in E(U^{\geq j}), \text{and}\ b_{1}'\not\in E(U^{\geq j}).
      \label{defensive}
  \end{align}

  Let $j \in [l, m]$ and let us define now the function
  $\varphi \colon E(U^{\geq j}) \to E(T^{\geq j})$ so that
  $\varphi(e) = \hat{e}$ for every $e\in E\left (U^{\geq j}\right
  )$. (Recall that $\hat{e}$ is the edge of $T$ from which $e$ has
  been copied, see the paragraph following~\eqref{corporeal} for the
  definition.) According to~\eqref{unhappily},~\eqref{character},
  and~\eqref{defensive}, the function
  $\varphi$  is injective. Hence  $\left |E \left (U^{\geq j} \right ) \right| \leq
  \left |E \left (T^{\geq j} \right ) \right |$, i.e., $\alpha'_j \leq \alpha_j$. This
  proves the first half of~\eqref{item:alphaineq}.

  When $j\in [l+1, m]$, the function $\varphi$ is even
  surjective: by definition of $l$, every link $f \in E(T^{\ge j})$
  satisfies $|\adh(f)| = |\adh(f_i)|$ for some $i \in [2]$, therefore
  $f_i \in U^{\geq j}$ is the preimage of $f$ by~$\varphi$.  As a
  consequence, for every $j \in [l+1, m]$, we have $|E(U^{\geq
      j})| = |E( T^{\ge j} )|$, that is, $\alpha'_j = \alpha_j$ and the first
  part of~\eqref{item:alphaeq} holds.\smallskip

  We now deal with the second parts of~\eqref{item:alphaineq}
  and~\eqref{item:alphaeq}. Let $j \in [l, m]$; we will prove
  that $\beta_j' \geq \beta_j$.  Recall that $s_1s_2$ is
  the link joining the nodes $s_{1}$ and $s_{2}$ in $U$ and
  $\adh(s_1s_2)=F$. As $|F|<k\leq l \leq j$, we have that $s_1s_2\not\in
  E(U^{\geq j})$. This means that none of the connected components of $U^{\geq
    j}$ contains $s_1s_2$.  Let $Q$ be a connected component of $U^{\geq
    j}$. Then, from~\eqref{unhappily}, for every $f\in E(Q)$ it holds that
  $\hat{f}\in E(T^{\geq j})$.  Therefore, if $Q$ is a connected
  component of $U^{\geq j}$, then the subgraph $T_{Q}$ of $T$ with link
  set $\{\hat{f}, f \in E(Q)\}$ is a (connected) subtree of $T^{\geq
    j}$. Let $\psi$ be the function that maps every connected component
  $C$ of $U^{\geq j}$ to the connected component of $T^{\geq j}$ that
  contains the subgraph $T_{Q}$, defined as above.  Recall that by
  definition of $l$, if
  $f$ is a link of $T^{\geq j}$ then $f$ is not $k$-excessive and, thus,
  $f_i$ is a link in $U^{\geq j}$, for some $i \in [2]$. Therefore, the
  connected component of $T^{\geq j}$ containing $f$ is the image by
  $\psi$ of the connected component of $U^{\geq j}$ containing
  $f_i$. This proves that $\psi$ is surjective. The forest $U^{\geq j}$
  then {has} at least as many connected components as $T^{\geq j}$ or,
  in other words, $\beta'_j \geq \beta_j$. This proves the second
  part of~\eqref{item:alphaeq} and concludes the proof.
  \renewcommand{\qedsymbol}{$\blacksquare$}
\end{proof} \renewcommand{\qedsymbol}{$\square$}

\begin{sublemma}
  \label{reexamine}
  If $T$ has a $k$-excessive link, then there is an integer $l \geq k$
  such that 
  \begin{enumerate}[(i)]
  \item $\alpha'_l < \alpha_l$ and
  \item\label{item:alphaeq2} for every $j \in [l+1, m]$, $\alpha'_j = \alpha_j$ and $\beta_j' \geq \beta_j$.
  \end{enumerate}  
\end{sublemma}

\begin{proof}[Proof of \autoref{reexamine}]
  Let $g$ be an $k$-excessive link of maximum adhesion and 
  let $l = |\adh(g)|$. By definition of $k$-excessive (see~\eqref{ingenuity}),  we have~$l\geq k$.
  By the choice of $g$, the integer $l$ satisfies the requirements of \autoref{committal}. 
  Item~\eqref{item:alphaeq2} then directly follows.
  Let us consider the same function $\varphi$ as in the proof of
  \autoref{committal} (i.e., we set $\varphi(e) = \hat{e}$).  
  By definition of $g$, we have $g\in E(T^{\ge l})$ whereas $g_1,g_2 \not \in
  E(U^{\ge l})$. Therefore $g$ has no preimage in $E(U^{\ge l})$ by $\varphi$: this
  function is not surjective.
  Thus, $|E( U^{\ge l} )| < |E( T^{\ge l} )|$, or, equivalently, $\alpha'_l < \alpha_l$.
\renewcommand{\qedsymbol}{$\blacksquare$}
\end{proof}
\renewcommand{\qedsymbol}{$\square$}

\begin{sublemma}\label{classical}
  If $a$ and $b$ are distinct and not incident in $T$, then $aTb$ has at least one $k$-excessive link.
\end{sublemma}

\begin{proof}[Proof  of \autoref{classical}]
  Towards a contradiction, let us assume the opposite statement: for
  every link $e \in E(aTb)$, (at least) one of the following holds:
  $|\adh(e)| \leq |\adh(e_1)|$ or $|\adh(e)|\leq|\adh(e_2)|$ (the case
  where $|\adh(e)|<k$ is excluded because $e \in E(aTb)$ and Condition
  \itemref{commanded} holds).  Our aim is to find a {non-leanness
    certificate} for $(T,\X)$ that contradicts the minimality of
  $(k,a,b,A,B)$.

  Let $e\in E(aTb)$ and $i\in [2]$ such that $|\adh(e)| \leq
  |\adh(e_i)|$. By \autoref*{recurrent}.\itemref{doctrines}-\itemref{onlooking}, we in
  fact have $|\adh(e)| = |\adh(e_i)|$ and $\adh(e_{3-i}) \subseteq F$.
  In particular $|\adh(e_{3-i})| < |\adh(e)|$, because $|\adh(e)|
  \geq k$, as noted above, while $|F| < k$. We deduce:
   
  \begin{equation}\label{eqn:disjunc}
    \forall e \in E(aTb),\ \exists i\in [2],\
    \left \{
      \begin{array}{rcl}
        |\adh(e_i)| &=& |\adh(e)|\ \text{and}\\
        |\adh(e_{3-i})| &<& |\adh(e)|.
      \end{array}
    \right .
  \end{equation}

  For the case where $e=a$ in~\eqref{eqn:disjunc}, we claim that $i =
  1$, i.e.
  \begin{equation}
    \label{eq:a-is-blue}
    |\adh(a_{1})| = |\adh(a)|\quad \text{and}\quad |\adh(a_{2})|<|\adh(a)|.
  \end{equation}

  If this claim was not correct, then we would have
  $|\adh(a_{2})|=|\adh(a)|$ and by applying
  \autoref*{recurrent}.\itemref{onlooking} for $e=a$ we would get
  $\adh(a_{1})\subseteq F$. Together with $A\subseteq \adh(a_1)\cup F$
  (from \autoref{vegetable}), this would imply $A\subseteq F$. Hence
  $k=|A|\leq |F|<k$, a contradiction.

  By replacing $a,A,a_1$ by $b,B,b_{2}$ in the argument
  above, we can similarly show
  \begin{equation}
    \label{eq:b-is-red}
    |\adh(b_{1})| < |\adh(b)|\quad \text{and}\quad |\adh(b_{2})| = |\adh(b)|.
  \end{equation}
  
  \begin{claim}\label{precursor}
    There
    are two incident links $e,f \in E(aTb)$ such that  
    \begin{enumerate}
    \item $\{e,f\}
      \neq \{a,b\}$, 
    \item $e\in E(aTf)$, and
    \item $|\adh(e)| = |\adh(e_1)|$ and $|\adh(f)| = |\adh(f_{2})|$.
    \end{enumerate}
  \end{claim}
  
  \begin{proof}
    Let us color in blue every edge $e$ of $aTb$ such that $|\adh(e)| =
    |\adh(e_1)|$ and in red every edge such that $|\adh(e)| =
    |\adh(e_2)|$. By the virtue of~\eqref{eqn:disjunc}, every edge
    receives exactly one color. By~\eqref{eq:a-is-blue}
    and~\eqref{eq:b-is-red}, $a$ is colored blue and $b$ is colored
    red. Let $f$ be the first red edge met when following $aTb$
    from~$a$ and let $e$ be the edge met just before. This choice
    ensures the two last desired properties. We assumed that $a$ and $b$ are not incident, so $\{e,f\}
    \neq \{a,b\}$.%
    \renewcommand{\qedsymbol}{$\diamond$}
  \end{proof}
  \renewcommand{\qedsymbol}{$\square$}

\begin{claim}\label{cl:sep}
Let $e,f\in E(T)$ be links satisfying the conditions of \autoref{precursor}. Then  $F$ separates $\adh(e)$ from $\adh(f)$. 
\end{claim}

\begin{proof}
The third condition of \autoref{precursor} along with \autoref*{recurrent}.\itemref{onlooking}, implies that  
 \begin{eqnarray}
\adh(e_{2})
\subseteq F & \mbox{and} & \adh(f_1) \subseteq F.\label{dialectic}
 \end{eqnarray} 

\begin{figure}[h]
  \centering
\begin{tikzpicture}[scale = 0.75, scale = 0.48, every node/.style =
  smallbag]
  \begin{scope}
    \fill[rounded corners, color = colorD, xshift = 1cm] (14.75, 1.75) -- (14.75, -1.75) -- (7.25, -1.75) -- (7.25, 1.75) -- cycle;
    \fill[rounded corners, color = colorC] (-1.75, 1.75) -- (-1.75,
    -1.75) -- (4.75, -1.75) -- (4.75, 1.75) -- cycle;
    \fill[rounded corners, color = colorM] (5.75, 2.25) rectangle (7.25, -1.75);
    \draw (1.75, -2.5) node[normal] {$C$};
    \draw (6.5, -2.5) node[normal] {$M$};
    \draw (12, -2.5) node[normal] {$D$};
    \draw (0,0) node (b0) {}
    -- ++(4,0) node (b2) {}
    node[normal,midway, yshift = 0.25cm] {$a$}
    -- ++(2.5,0) node (v0) {}
    node[normal,midway, yshift = 0.25cm] {$e$};
    \draw (v0) -- ++(2.5,0) node (v1) {} 
    node[normal,midway, yshift = 0.25cm, xshift = -0.1cm] {$f$};
    \draw (v1)-- ++(4,0) node (b1) {}
    node[normal,midway, yshift = 0.25cm] {$b$}
    (b1) -- ++(1,1) node {}
    (b1) -- ++(1,-1) node {}
    (b1) -- ++(2,0) node {};
    \draw
    (b0) -- ++(-1,1) node {}
    (b0) -- ++(-1,-1) node {}
    (v0) -- ++(0,1.5) node {};
  \end{scope}
    \begin{scope}[yshift = -9cm]
    \begin{scope}[every node/.style = smallbag]
    \fill[rounded corners, color = colorD, xshift = 1cm] (14.75, 1.75) -- (14.75, -1.75) -- (7.25, -1.75) -- (7.25, 1.75) -- cycle;
    \fill[rounded corners, color = colorC] (-1.75, 1.75) -- (-1.75,
    -1.75) -- (4.75, -1.75) -- (4.75, 1.75) -- cycle;
    \fill[rounded corners, color = colorM] (5.75, 2.25) rectangle (7.25, -1.75);
      \draw (1.75, 2.5) node[normal] {$C_A$};
      \draw (12, 2.5) node[normal] {$D_A$};
      \draw (6.5, 3) node[normal] {$M_{A}$};
      \draw
      (0,0) node (b0) {}
      -- ++(4,0) node (b2) {}
      node[normal,midway, yshift = 0.25cm] {$a_1$}
      -- ++(2.5,0) node (v0) {}
      node[normal,midway, yshift = 0.25cm] {$e_1$};
      \draw (v0) -- ++(2.5,0) node (v1) {} 
      node[normal,midway, yshift = 0.25cm, xshift = -0.08cm] {$f_1$};
      \draw (v1) -- ++(2,0) node (uf) {}
      node[midway,normal, yshift = 0.25cm] {$b_1$}
      -- ++(2,0) node (b1) {}
      node[midway,normal, yshift = 0.25cm] {$b_1'$}
      (b1) -- ++(1,1) node {}
      (b1) -- ++(1,-1) node {}
      (b1) -- ++(2,0) node {};
      \draw
      (b0) -- ++(-1,1) node {}
      (b0) -- ++(-1,-1) node {}
      (v0) -- ++(0,1.5) node {};
%
      \begin{scope}[yshift = -5cm]
        \draw [decorate,decoration={brace,amplitude=3pt}] (-2, 1.75) -- (-2, 3.25) node[normal, midway, anchor = east, xshift = -0.25cm] {$F$};
    \fill[rounded corners, color = colorD, xshift = 1cm] (14.75, 1.75) -- (14.75, -1.75) -- (7.25, -1.75) -- (7.25, 1.75) -- cycle;
    \fill[rounded corners, color = colorC] (-1.75, 1.75) -- (-1.75,
    -1.75) -- (4.75, -1.75) -- (4.75, 1.75) -- cycle;
    \fill[rounded corners, color = colorM] (5.75, 1.75) rectangle (7.25, -2.25);
    \draw (1.75, -2.5) node[normal] {$C_B$};
    \draw (12, -2.5) node[normal] {$D_B$};
      \draw (6.5, -3) node[normal] {$M_{B}$};
    \draw
    (0,0) node (b0) {}
    -- ++(2,0) node (ue) {}
    node[midway,normal, yshift = -0.25cm] {$a_2'$}
    -- ++(2,0) node (b2) {}
    node[midway,normal, yshift = -0.25cm] {$a_2$}
    -- ++(2.5,0) node (v0) {}
    node[midway,normal, yshift = -0.25cm] {$e_2$};
    \draw (v0) -- ++(2.5,0) node (v1) {} 
    node[normal,midway, yshift = -0.25cm, xshift = -0.08cm] {$f_2$};
    \draw (v1) -- ++(4,0) node (b1) {}
    node[normal,midway, yshift = -0.25cm] {$b_2$}
    (b1) -- ++(1,1) node {}
    (b1) -- ++(1,-1) node {}
    (b1) -- ++(2,0) node {}
    (b0) -- ++(-1,1) node {}
    (b0) -- ++(-1,-1) node {}
    (v0) -- ++(0,-1.5) node {};
    \draw (uf) -- (ue);
  \end{scope}
\end{scope}
\end{scope}
\end{tikzpicture}
  \caption{The sets $C$, $D$, $M$, $C_A$, $M_{A}$, $D_A$, $C_B$, $M_{B}$,
    and $D_B$ in the original decomposition (top) and in the
    segregation that we consider in the proof of \autoref{cl:sep}
    (bottom).}
  \label{imageless}
\end{figure}
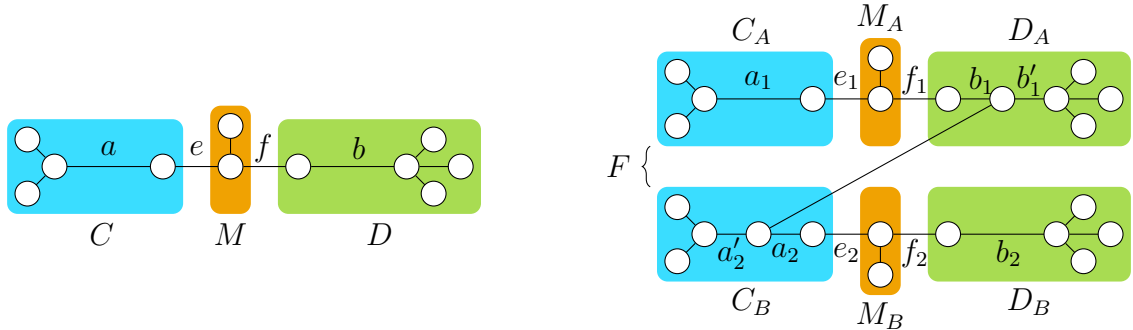
Let us call $T_C$, $T_M$ and $T_D$ the connected components of $T
- \{e,f\}$ that contain, respectively, one endpoint of $e$ and
none of $f$, both one endpoint of $e$ one of $f$, and one endpoint of
$f$ but none of $e$.
As in the proof of \autoref{recurrent}, we set $C = \bigcup_{t \in
  V(T_C)} X_t$, $M = \bigcup_{t \in V(T_M)} X_t$, and $D =
\bigcup_{t \in V(T_D)} X_t$, and for every $i \in [2]$ we define
\begin{eqnarray*}
C_A = C \cap V(G_A) &\quad M_A = M \cap V(G_A)&\quad D_A = D \cap V(G_A), \\
C_B = C \cap V(G_B) &\quad M_B = M \cap V(G_B)&\quad D_B = D \cap V(G_B). \label{achieving}
\end{eqnarray*}
These sets are depicted in \autoref{imageless} on an example of a tree-cut decomposition.
Notice that $F$ contains all edges that have the one endpoint in $C_A \cup M_{A} \cup D_A$ and the other in $C_B \cup M_{B} \cup D_B$. In other words:
\begin{align}
  F  = & E(C_{A},C_{B})\nonumber\\
       & \cup E(C_{A},M_{B}\cup D_{B})\nonumber\\
       & \cup E(C_{B},M_{A}\cup D_{A})\nonumber\\
       & \cup E(M_{A}\cup D_{A},M_{B}\cup D_{B})\label{modifying}
\end{align}
and
\begin{align}
  F  = & E(D_{A},D_{B})\nonumber\\
       & \cup E(D_{A},M_{B}\cup C_{B})\nonumber\\
       & \cup E(D_{B},M_{A}\cup C_{A})\nonumber\\
       & \cup E(M_{A}\cup C_{A},M_{B}\cup C_{B}).\label{directors}
\end{align}
On the other hand, we have
\begin{eqnarray}
  E(C_B, M_{B} \cup D_B) &  \subseteq  & \adh(e_{2}), \label{interplay}\\
 E(D_A, M_{A} \cup C_A) & \subseteq & \adh(f_1), \label{paralysis}
 \end{eqnarray}
 \noindent and
 \begin{align}
   \adh(e) = & E(C_A, M_{A} \cup D_A) \cup E(C_A, M_{B} \cup D_B) \nonumber\\
             & \cup E(C_B, M_{A}\cup D_A) \cup E(C_B, M_{B} \cup D_B),\label{kidnapped}\\
   \adh(f) = & E(D_A, M_{A} \cup C_A) \cup E(D_A, M_{B} \cup C_B) \nonumber\\
             & \cup E(D_B, M_{A} \cup C_A) \cup E(D_B, M_{B} \cup C_B).\label{confessed}
\end{align}
From~\eqref{interplay},~\eqref{paralysis}, and~\eqref{dialectic} we have
\begin{eqnarray}
E(C_B, M_{B} \cup D_B) &  \subseteq & F, \label{automaton}\\
E(D_A, M_{A} \cup C_A) &  \subseteq & F. \label{retreated}
\end{eqnarray}
Using~\eqref{modifying},~\eqref{kidnapped}, and~\eqref{automaton}
and also~\eqref{directors},~\eqref{confessed}~and \eqref{retreated}, we deduce:
\begin{align}
  \adh(e) \setminus F =& E(C_A, M_{A} \cup D_A),\quad \text{and}\label{overflows}\\
  \adh(f) \setminus F =& E(D_B, M_{B} \cup C_B).\label{labourers}
\end{align}
In order to prove that $F$ separates $\adh(e)$ from   $\adh(f)$, let $P$ be a path in $G$ connecting an edge of $\adh(e)$ and and edge of  $\adh(f)$.
If this path contains an edge of $F$, then we are done.
Otherwise, from~\eqref{overflows} and~\eqref{labourers}, $P$ should be
a path from an edge from $E(C_A, M_{A} \cup D_A) \subseteq E(G_A)$ to
an edge from $E(D_B, M_{B} \cup C_B) \subseteq E(G_B)$. Clearly, this path will have an edge in~$F$ 
and the claim follows.
\renewcommand{\qedsymbol}{$\diamond$}
\end{proof}
\renewcommand{\qedsymbol}{$\square$}

Let $e,f\in E(T)$ be links satisfying the conditions of~\autoref{precursor}. 
As $(k, a, b, A, B)$ is a non-leanness certificate, $|\adh(e)|,|\adh(f)|>k$
so we can define $W_e$ as a $k$-element subset of $\adh(e)$ and similarily for $W_f$ in $\adh(f)$.
We now claim that the quintuple $(k,e,f,W_e,W_f)$ is a
{non-leanness certificate} for $(T,\X)$. Condition \itemref{distanced}
follows as, from \autoref{cl:sep}, $F$ separates $\adh(e)$ from $\adh(f)$.
Condition~\itemref{commanded} holds because $eTf$ is a
subpath of $aTb$.
Notice now that $e$ and $f$ are incident while $a$ and $b$ are not. Therefore, the distance between $e$ and $f$  in $T$ is smaller than that between $a$ and $b$. This contradicts the minimality of the choice of 
 $(k,a,b,A,B)$ as a minimal {non-leanness certificate} for $(T,\X)$.  
 \autoref{classical}~follows.
\renewcommand{\qedsymbol}{$\blacksquare$}
\end{proof}
\renewcommand{\qedsymbol}{$\square$}

\begin{sublemma}\label{overgrown}
If $T$ does not contain any $k$-excessive link,  then there is an integer $l \geq k$
  such that 
 \begin{itemize} 
\item   for
  every $j \in [l+1, m]$, $\alpha'_j = \alpha_j$ and $\beta'_j \geq \beta_j$ , while 
  \item $\alpha'_l \leq \alpha_l$ and $\beta'_l > \beta_l$.
  \end{itemize}
\end{sublemma}

\begin{proof}[Proof of \autoref{overgrown}]
  Let us assume that $T$ has no $k$-excessive link. By
  \autoref{classical}, $a$ and $b$ are either incident edges, or they
  are the same edge.
  We set $l= \min \{|\adh(a)|,|\adh(b)|\}$ and observe that $l\geq k$. Clearly, $l$ satisfies the requirements of
  \autoref{committal}, so we have $\alpha'_j = \alpha_j$ and
  $\beta'_j \geq \beta_j$ for every $j \in [l+1, m]$ and
  $\alpha'_l \leq \alpha_l$.
  Let $\psi$ be the function that maps connected components of
  $U^{\geq l}$ to connected components of $T^{\geq l}$, as defined in
  the proof of \autoref{committal} (for $j=l$), where it is shown to be surjective.
  
  As $a$ is not $k$-excessive, the first statement of~\autoref{withstood} does not hold for $e=a$.
  We conclude that 
  for some $i \in [2]$, $|\adh(a)| =
  |\adh(a_i)|$  and $\adh(a_{3-i}) \subseteq F$.
  The case $i=2$ is not possible because then $\adh(a_{1})\subseteq
  F$ which, together with $A \subseteq \adh(a_1) \cup F$ from
  \autoref{vegetable}, implies $k\leq |\adh(a_{1})| \leq |F|<k$, a
  contradiction. Hence $i=1$ and we have that $|\adh(a)| =
  |\adh(a_1)|$. By definition $l \leq |\adh(a)|$, hence $a_1$
  belongs to $U^{\ge l}$. Symmetrically, we can show that $b_2$
  belongs to $U^{\ge l}$.

  From the definition of $l$, both $a$ and $b$ belong
  to $T^{\geq l}$. As they are incident or equal we get that they
  belong to the same connected component of this graph.
  Besides, as noted above, both $a_1$ and $b_2$ belong to $U^{\geq l}$. However
  these links are separated in $U$ by the link $s_1s_2$, which has
  adhesion $|F| < k \leq l$. (Recall that $s_1s_2$ is the link added
  in the construction of $U$ to join the two copies $U_1$ and $U_2$ of
  $T$; see
  \autoref{possesses} for a reminder.)
  Therefore, $a_1$ and $b_2$ do not belong to the same connected
  component of $U^{\ge l}$.
  This proves that $\psi$ is not
  injective: $U^{\ge l}$ has more connected components than $T^{\ge l}$. Therefore,
  $\beta'_l > \beta_l$.
  \renewcommand{\qedsymbol}{$\blacksquare$}
\end{proof}
\renewcommand{\qedsymbol}{$\square$}
\medskip

We are now in position to conclude the proof of~\autoref{dialogues}.

\begin{sublemma}\label{slem:fatfat}
  The fatness of  $(U, \mathcal{Y})$ is smaller than that of~$(T, \X)$.
\end{sublemma}
\begin{proof}
  Recall that we respectively denote by
\begin{gather*}
  (\alpha_m, -\beta_m, \alpha_{m-1},
  -\beta_{m-1}, \dots, \alpha_1, -\beta_1)\\
  \text{and}\\
  (\alpha'_m, -\beta'_m,
 \alpha'_{m-1}, -\beta'_{m-1}, \dots, \alpha'_1, -\beta'_1)
\end{gather*}
the fatnesses of $(T, \X)$ and $(U, \mathcal{Y})$.
Notice that if the assumption of~\autoref{reexamine} or of~\autoref{overgrown}
holds, then the fatness of $(U, \mathcal{Y})$ is (strictly) smaller
than that of $(T, \X)$.
As the assumptions of \autoref{reexamine} and~\autoref{overgrown}
are complementary, we are done.
\end{proof}

Sublemmas~\ref{slem:widthU} and \ref{slem:fatfat} show that $(U, \mathcal{Y})$ has the desired properties, so we are done.
\end{proof}

\begin{lemma}\label{princeton}
  Every 3-edge connected graph $G$ has a lean tree-cut decomposition of width~$\tcw(G)$.
\end{lemma}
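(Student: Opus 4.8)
The plan is to use \autoref{dialogues} as a single reduction step and iterate it, using the fatness as a strictly decreasing potential that guarantees the process halts.

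First I would fix $w = \tcw(G)$ and pick a tree-cut decomposition $\mathcal{D}_0 = (T_0, \X_0)$ of $G$ of width exactly $w$, which exists by definition of $\tcw$. Then I build a sequence $\mathcal{D}_0, \mathcal{D}_1, \dots$ of tree-cut decompositions of $G$ as follows: given $\mathcal{D}_i$, if it is lean we stop; otherwise $\mathcal{D}_i$ is not lean, and I claim it has width exactly $w$ — indeed by induction its width is at most $w$, while it cannot be less than $\tcw(G) = w$ — so $G$ (which is 3-edge-connected by hypothesis, a property that does not depend on the decomposition) together with $\mathcal{D}_i$ satisfies the hypotheses of \autoref{dialogues}. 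Applying \autoref{dialogues} then yields a tree-cut decomposition $\mathcal{D}_{i+1}$ of $G$ of width at most $w$ whose fatness is strictly smaller than that of $\mathcal{D}_i$, and we take this to be the next term of the sequence. Note that each $\mathcal{D}_{i+1}$ is again a tree-cut decomposition of the same graph $G$, so the iteration is well defined.

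For termination I would invoke the observation recorded just before \autoref{dialogues} that there is no infinite decreasing sequence of fatnesses of tree-cut decompositions of a fixed graph; if one prefers a self-contained justification, note that a fatness is a $2m$-tuple $(\alpha_m, -\beta_m, \dots, \alpha_1, -\beta_1)$ in which $0 \le \alpha_i$ and, since $T^{\geq i}$ is induced by its link set and hence has no isolated vertex, $0 \le \beta_i \le \alpha_i$ for every $i$; the lexicographic order restricted to such tuples is therefore well-founded (each $\alpha_i$-coordinate can strictly decrease only finitely often, and once it is fixed the corresponding $-\beta_i$-coordinate is bounded below by $-\alpha_i$). Consequently the strictly decreasing chain of fatnesses $\operatorname{fatness}(\mathcal{D}_0) > \operatorname{fatness}(\mathcal{D}_1) > \cdots$ is finite, so the construction stops at some $\mathcal{D}_N$, which by construction is lean and has width $w = \tcw(G)$; this is exactly the desired decomposition.

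Since \autoref{dialogues} carries the entire combinatorial weight, there is no genuine obstacle in this lemma. The only points deserving a moment's care are the two bookkeeping observations used above: that the width of each $\mathcal{D}_i$ remains equal to $\tcw(G)$ (so that \autoref{dialogues} keeps applying, rather than only producing a weak ``at most $w$'' bound that might a priori allow the width to drop below $\tcw(G)$, which is impossible), and that 3-edge-connectedness — being a property of $G$ alone — is automatically preserved along the whole iteration.
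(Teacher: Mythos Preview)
Your proposal is correct and follows exactly the paper's approach: start from an optimal decomposition, iterate \autoref{dialogues} to strictly decrease the fatness, and use well-foundedness of the fatness order to guarantee termination. The paper's own proof is a two-line version of what you wrote; your added detail on why the lexicographic order on fatnesses is well-founded (via the bound $-\beta_i \ge -\alpha_i$ once $\alpha_i$ has stabilized) is a nice justification of the one fact the paper states without argument.
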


\begin{proof}
  Recall that we order fatnesses by lexicographic order.
  The lemma follows from~\autoref{dialogues} and the fact that
  the set of fatnesses of tree-cut decompositions of a given graph does
  not contain an infinite decreasing sequence.
\end{proof}

Based on~\autoref{princeton}, we are now ready to give the proof
of~\autoref{pluralism}. The proof is essentially a reduction of the
general case to that of 3-edge-connected graphs, that is handled
by~\autoref{princeton}.

\begin{proof}[Proof of~\autoref{pluralism}]
  For every $w \in \N$, we show that every graph $G$ such that
  $\tcw(G) \leq w$ has a tree-cut decomposition of width at most $w$
  that is lean, by induction on the number of vertices of $G$.

  Let $w \in \N$ and let $G$ be a graph of tree-cut width at most~$w$.
  If $|V(G)|\leq w$, then the tree-cut decomposition $((\{t\},\emptyset),\{V(G)\})$ 
  has width at most $w$ and is trivially lean.
  Suppose now that $|V(G)| > w$ and that the statement holds for all
  graphs with less vertices than~$G$ (induction hypothesis).

Let $F$ be a cut of $G$ of minimum order and let $\{V_1,V_2\}$ be the
corresponding partition of~$V(G)$. (As we allow multiedges, it is possible
that the edges in $F$ share both endpoints.)
If $|F|>2$ then $G$ is 3-edge-connected and the result follows
because of \autoref{princeton}. Suppose now  that $|F|\leq 2$.
For every $i \in [2]$ we define $G_i$ as follows.
If $|F| = 2$ and the endpoints of $F$ in $V_i$ are distinct, we denote
by $G_i$ the graph obtained from $G[V_i]$ by adding an edge between
these endpoints (or increasing the
multiplicity by one if the edge already exists).
In all the other cases, we set $G_i = G[V_i]$.
Notice that $G_1$ and $G_2$ are both immersions of $G$, hence they
have tree-cut-width at most~$w$ (see Lemma~10 of \cite{Wollan201547}). Also, they have less vertices than
$G$, so we can apply our induction hypothesis.

For every $i\in[2]$, let $(T^i, \X^i)$ be a tree-cut decomposition of
width $\tcw(G_i)\leq w$ of $G_i$, that is lean. Let $x_i$ be an
endpoint of $F$ in $V_i$ or, in the case $F = \emptyset$, any vertex
of $V_i$.
Let $t_i$ be the node of $T^i$ such that $x_i \in X^i_{t_i}$.
We define $T$ as the tree obtained from the disjoint union of $T^1$
and $T^2$ by adding the link $t_1t_2$. We also set $\X = \X^1 \cup
X^2$. Clearly $(T, \X)$ is a tree-cut decomposition of $G$.
Notice that the adhesion of $t_1t_2$ is $F$ whose size is at most~2, hence it is not
bold.
Also, for every $i \in [2]$ and $e \in T^i$, if $\adh_{T^i}(e)$ contains
the edge added in the construction of $G_i$ (if any), then
$\adh_{T}(e)$ contains instead one of the edges of~$F$. Hence the size
of the adhesion of $e$ does not change from $T^i$ to~$T$.
We can therefore express the width of $(T,\X)$ in terms of $(T^i,
\X^i)$ and $F$:
\begin{align*}
  \width(T, \X) &= \max \left \{ \max_{e \in E(T)} |\adh_{(T, \X)}(e)|,\right .\\
                &\qquad \qquad \left .\max_{t \in V(T)} \left (|X_t| + |\{ t' \in N_T(t),\ \adh_{(T, \X)}(tt')\
                  \text{is bold} \}| \right ) \right \}\\
                &= \max \left \{ \max_{e \in E(T^1)} |\adh_{(T^1, \X^1)}(e)|,\
                  \max_{e \in E(T^2)} |\adh_{(T^2, \X^2)}(e)|,\ |F|,\right .\\
                &\qquad \qquad \max_{t \in V(T^1)} \left (|X_t| + |\{ t' \in N_{T^1}(t),\ \adh_{(T^1, \X^1)}(tt')\
                  \text{is bold} \}| \right ),\\
                &\qquad \qquad \left. \max_{t \in V(T^2)} \left (|X_t| + |\{ t' \in N_{T^2}(t),\ \adh_{(T^2, \X^2)}(tt')\
                  \text{is bold} \}| \right ) \right \}\\
                 &= \max\left \{ \width(T^1, \X^1),\ \width(T^2, X^2),
                   |F|\right\}\\
                &\leq \max\{w, |F|\}
\end{align*}
As $|V(G)| > w \geq \tcw(G)$, the tree of any miminum-width tree-cut
decomposition of $G$ has at least one link and the adhesion of this
link has size at most $w$. Hence $G$ has a cut of size at most
$w$. By minimality of $F$, we deduce $|F|\leq w$, hence $\width(T, \X)
\leq w$ from the inequalities above.

At this point of the proof we have constructed a tree-cut decomposition
$(T, \X)$ of $G$ of width at most~$w$. It remains to show that it is lean.
For this, we consider some $a,b \in E(T)$ and subsets $A \subseteq
\adh(b)$ and $B \subseteq \adh(b)$ of the same size~$k$.
It is enough in order to conclude the proof to assume
that there is no collection of $k$ edge-disjoint paths linking $A$ to
$B$ in~$G$ and show that $aTb$ has a link whose adhesion has size
less than $k$.

In the case where $a,b \in E(T^i)$ for some $i\in[2]$, we observe
that, as it is an immersion of $G$, $G_i$ does not contain $k$ edge-disjoint
paths linking $A$ to~$B$.
Because $(T^i, \X^i)$ is lean, there is a link $e$ in $aT^ib$
such that $|\adh_{(T^i, \X^i)}(e)|< k$ (in particular, $a \neq b$). As
noted above, $\left |\adh_{(T^i, \X^i)}(e) \right | = \left |\adh_{(T,
    \X)}(e)\right |$ for every $e \in E(T^i)$, hence $|\adh_{(T, \X)}(e)|< k$ and we are done.
It remains to consider the case where $a$ and $b$ do not belong to the
same of $T^1$ and $T^2$.
By \autoref{testament} (the variant of Menger's Theorem), there is in $G$ a cut of size strictly smaller
than $k$ that separates $A$ from $B$.
By minimality of $F$, this implies $|F| < k$.
Observe then that $t_1t_2$ is an edge of $aTb$
and $|\adh(t_1t_2)| = |F| <k$, as desired.
\end{proof}

\section*{Acknowledgments}

The authors wish to thank Micha\l{} Pilipczuk and Marcin Wrochna for
valuable discussions about lean tree-cut decompositions.
%
\newcommand{\etalchar}[1]{$^{#1}$}

\end{document}